\newtheorem{theorem}{Theorem}[section]
\newtheorem{lemma}[theorem]{Lemma}
\theoremstyle{definition}
\newtheorem{proposition}[theorem]{Proposition}
\newtheorem{corollary}[theorem]{Corollary}
\theoremstyle{remark}
\newtheorem{remark}[theorem]{Remark}
\numberwithin{equation}{section}
\begin{document}

\title{Decomposition of Cartan Matrix and Conjectures on  Brauer Character Degrees }

\author{Zeng Jiwen}
\address{School of Mathematics, Xiamen University, Xiamen, 361005, P. R. China}
\curraddr{School of Mathematics, Xiamen University, Xiamen, 361005,
P. R. China} \email{jwzeng@xmu.edu.cn}
\thanks{}


\subjclass[2000]{Primary 20C20; Secondary 20C16}


\dedicatory{School of Mathematics, Xiamen University, Xiamen,
361005, P. R. China.}

\keywords{Cartan Matrix, Block algebra, Brauer characters}

\begin{abstract}
Let $G$ be a finite group and $N$ be a normal subgroup of $G$.  Let
$J=J(F[N])$ denote the Jacboson radical of $F[N]$ and $I={\rm
Ann}(J)=\{\alpha \in F[G]|J\alpha =0\}$. We have another algebra
$F[G]/I$. We study the decomposition of Cartan matrix of $F[G]$
according to $F[G/N]$ and $F[G]/I$. This decomposition establishs
some connections between Cartan invariants and chief composition
factors of $G$. We find that existing zero-defect $p$-block in $N$
depends on the properties of $I$ in $F[G]$ or Cartan invariants.
When we consider the Cartan invariants for a block algebra $B$ of
$G$, the decomposition is related to what kind of blocks in $N$
covered by $B$. We mainly consider a block $B$ of $G$ which covers a
block $b$ of $N$ with $l(b)=1$. In two cases, we prove Willems'
conjecture holds for these blocks, which covers some true cases by
Holm and Willems. Furthermore We give an affirmative answer to a
question by Holm and Willems in our cases. Some other results about
Cartan invariants are presented in our paper.
\end{abstract}

\maketitle


%

\section{Introduction}
Let $A$ be a Frobenius algebra and $J(A)$ be the Jacboson radical,
then the relations between the Cartan matrix of $A$ and that of
$A/J(A)^{i}, i\geq 1$, is studied in \cite{some}. Now suppose
$A=F[G]$ is a group algebra for a finite group $G$, if $N$ is a
normal subgroup of $ G$, we have known some results about relations
between $F[G]$ and $F[G/N]$. For example, from the Alperin, Collins
and Sibley\cite{acs}, we can see some connections between Cartan
matrix of $F[G]$ and that of $F[G/N]$. A result of
Lusztig\cite[P162, Lemma 4.26]{feit} also implies their connections.
For a number of relations between projective modules of $F[G]$ and
$F[G/N]$, see W. Willems\cite{willem}.

When $N$ is a normal subgroup of $G$, let $I$ be the annihilator of
the Jacobson radical $J(F[N])$ in $F[G]$. Then $I$ will be an ideal
of $F[G]$. In this paper we shall establish some relations among
$F[G], F[G/N]$, and $F[G]/I$. One important result is the
decomposition of Cartan matrix of $G$ in terms of Cartan matrices of
$G/N$ and $F[G]/I$. As an application of this fact, we shall give
connections between Cartan invariants and chief composition factors
of $G$. We generalize similar results in \cite{tsu}.

When $B$ is a $p$-block algebra of $G$, we find that the
decomposition of Cartan matrix of $B$ is heavily related to what
 kind of $p$-blocks of $N$ covered by $B$. There are two cases that
we have more advantage to study. One is a zero-defect block in $N$.
Another is a block of $N$ with only one irreducible Brauer
character. So we shall discuss in what conditions $B$ will cover a
zero-defect $p$-block algebra of $N$. Our discussions produce some
conditions of existing zero-defect $p$-block algebra for $N$ in
terms of $I$ or Cartan invariants.

For a block $B$ of $G$, let $k(B)$ and $l(B)$ denote the numbers of
ordinary irreducible characters and irreducible Brauer characters of
$B$, respectively. We mainly discuss the Cartan invariants of $B$ if
it covers a block $b$ in $N$ of $l(b)=1$.

W. Willems and T. Holm \cite{willems}\cite{holm} present several
conjectures on Brauer character degrees. Although it is not easy to
prove their conjectures in general, they show that  there are
several affirmative answers for these conjectures. In our cases,
which cover some true cases proved by them, their conjectures are
proved true in this paper.

Let $p$-block $B$ of $G$ has defect group $D$ and Cartan matrix $C$,
W. Willem and T. Holm present a question: is it true that
$$\textrm{Tr}(C)\leq l(B)|D|?$$
We give affirmative answer for their question in our cases.

As application for our results, we also consider the bounds of
Cartan invariants in terms of defect groups. The motivity comes from
Brauer Problem VII\cite[Ch. IV, \S5]{feit}. We prove the Cartan
invariants are bounded by the order of defect group in our cases.

An interesting work of Cartan matrix is studying its eigenvalue, see
Wada's paper\cite{wada}. Here we present a similar result which
generalize a result of \cite{wada}\cite[Ch. IV, Lemma 4.26]{feit}.

Cartan matrix plays an key role in modular representation of finite
groups, so we can find a lot of articles about Cartan matrix,
see\cite{kul} \cite{rob} \cite{shi} .

This paper is organized as follows. Section 2 introduces two basic
Lemmas; In section 3 we discuss relations among $F[G], F[N]$ and
$F[G]/I$; Section 4 provides results about connections between block
algebras of $F[G]$ and $F[G]/I$; Section 5 gives a decomposition of
Cartan matrix of $G$ in terms of Cartan matrices of $F[G/N]$ and
$F[G]/I$; In section 6 we study Willems' conjecture on Brauer
character degrees. In our cases we prove Willems' conjecture and
give an affirmative answer to a Willems' question. Section 7 is the
application of previous sections. We present some results on Cartan
invariants.

 In this paper, a $G$-module usually means a
left $G$-module
 unless we give it under specified conditions. $F$ is always a splitting field of character $p\neq 0$.
 For any two $G$-modules $M, P$,
  we use $(M, P)^{G}$ denote ${\rm
Hom}_{F[G]}(M, P)$ in this article. We fix $A=F[G], J=J(F[N])$, the
Jacobson radical of $F[N]$, and $I=\{\alpha \in F[G]|J\alpha =0\}$.
Some notations and basic
 results are refered to \cite{some}.

\section{ Lemmas}

Let $N \unlhd G$ and  $J=J(F[N])$ denote the Jacboson Radical of
$F[N]$. We consider $J(F[N])$ and $F[N]$ as $G$-module by
$G$-conjugate action on them. For any $g \in G$, since $g J(F[N])=
J(F[N])g$, so $F[G] J(F[N])$ is a nilpotent ideal of $F[G]$, or
$F[G] J(F[N])\leq J(F[G])$. If $S$ is a subset of $F[G]$, let
$$r(S)={\rm Ann}_{r}(S)=\{a \in F[G] | Sa=0\}$$ denote the right
annihilater ideal decided by  $S$.

\begin{lemma} Let $J=J(F[N])$ and let $r(J)$ denote the right annihilater
ideal of $J$ in $F[G]$, then  $r(J)=r(F[G] J(F[N]))$. Thus $r(J)$ is
an ideal of $F[G]$.
\end{lemma}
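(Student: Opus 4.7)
The plan is to prove the two set-equalities $r(J)\subseteq r(F[G]J)$ and $r(F[G]J)\subseteq r(J)$ separately, and then read off that $r(J)$ is a two-sided ideal from the fact (already noted in the setup) that $F[G]J$ is a two-sided ideal of $F[G]$.

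First I would observe the trivial inclusion $r(F[G]J)\subseteq r(J)$: since $J\subseteq F[G]J$ (as $1\in F[G]$), any element annihilating $F[G]J$ from the right certainly annihilates $J$. The more substantive inclusion is $r(J)\subseteq r(F[G]J)$. Suppose $a\in r(J)$, so $Ja=0$. Then for any $g\in F[G]$ and $j\in J$, associativity gives $(gj)a=g(ja)=0$, hence $(F[G]J)a=0$ and $a\in r(F[G]J)$. So the two annihilators coincide.

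For the ideal assertion, I would invoke the observation made just before the lemma: because $N\trianglelefteq G$, conjugation by any $g\in G$ restricts to an algebra automorphism of $F[N]$ fixing the characteristic subspace $J=J(F[N])$, so $gJ=Jg$ for every $g\in G$. Consequently $F[G]J=JF[G]$ is a two-sided ideal of $F[G]$ (indeed nilpotent, as the excerpt already notes). The right annihilator of any two-sided ideal is itself two-sided: if $a\in r(F[G]J)$ and $b\in F[G]$, then $(F[G]J)(ba)=\bigl((F[G]J)b\bigr)a\subseteq (F[G]J)a=0$, so $ba\in r(F[G]J)$, while $r(F[G]J)$ is a right ideal by definition. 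Combining this with the equality $r(J)=r(F[G]J)$ from the previous paragraph yields the conclusion.

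There is no real obstacle here; the only point that requires care is recognising that although $r(J)$ defined as the right annihilator of the \emph{subspace} $J$ need not a priori be a left ideal, replacing $J$ by its $F[G]$-saturation $F[G]J$ costs nothing (by associativity) and immediately delivers both the equality and the two-sidedness. The normality of $N$ enters exactly once, in verifying that $F[G]J$ is closed on the right.
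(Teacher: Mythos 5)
Your proposal is correct and follows essentially the same route as the paper: the easy inclusion $r(F[G]J)\subseteq r(J)$, the associativity argument for the reverse inclusion, and then the observation that the right annihilator of the two-sided ideal $F[G]J$ is itself two-sided. You spell out the last step (why a right annihilator of a two-sided ideal is a left ideal) a bit more explicitly than the paper does, but the substance is identical.
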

\begin{proof} $r(F[G] J(F[N]))\leq r(J)$ is easy to know. If $a
\in r(J)$, then $Ja=0$. Thus $F[G] J(F[N])a=0$, which induces $a \in
r(F[G] J(F[N]))$.
\end{proof}
\begin{remark}
 In the following sections, we use $I$ denote $r(J)$. According
to Y. Tsushima\cite{tsu}, we can find an element $c$ in the center
of the group algebra $F[G]$ such that $I=F[G]c$. Hence
$F[G]/F[G]J(F[N])$ is a symmetric algebra too.
\end{remark}
 Lemma 2.1 makes the following discussions reasonable.

If $M$ is a right $G$-module, then we let $M^{*}={\rm Hom}_{F}(M,
F)$ denotes the dual module under usual way, so it is a left
$G$-module. Now we let $\{P_{1}, P_{2}, \cdots, P_{n}\}$ denote a
complete set of representatives of the isomorphism classes of
principal indecomposable right $G$-modules. Since $F[G]$ is a
symmetric algebra, then $\{P^{*}_{1}, P^{*}_{2}, \cdots,
P^{*}_{n}\}$ is a complete set of representatives of the isomorphism
classes of principal indecomposable left $G$-modules when we
consider $F[G]$ as a left regular $G$-module. We recall that the
Cartan matrix $(c_{ij})$ decided by $\{P_{1}, P_{2}, \cdots,
P_{n}\}$ is $c_{ij}=$ the multiplicity of Hd$(P_{j})$ as a
composition factor in $P_{i}$\cite{some}.

\begin{lemma} Let $S(M^{*})=\{f \in M^{*}| J(F[N])f=0\}$, then $S (M^{*})=(M/MJ)^{*}$, where $J=J(F[N])$.
\end{lemma}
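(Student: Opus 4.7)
The plan is to prove the identity by unwinding the definitions of the dual action and of $S(M^*)$, then identifying the set of functionals on $M$ that vanish on $MJ$ with the dual of the quotient $M/MJ$.

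First I would fix conventions. Since $M$ is a right $G$-module, the dual $M^{*}=\mathrm{Hom}_{F}(M,F)$ becomes a left $G$-module via $(\alpha\cdot f)(m)=f(m\alpha)$ for $\alpha\in F[G]$, $f\in M^{*}$, $m\in M$. This is the ``usual way'' alluded to in the paragraph preceding the lemma, and it is the action with respect to which $S(M^{*})$ is defined.

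Next I would translate the condition $J\cdot f=0$. By the formula above, $J\cdot f=0$ means that for every $j\in J$ and every $m\in M$ one has $f(mj)=0$, i.e.\ $f$ vanishes on the $F$-subspace $MJ\subseteq M$. Conversely, any $f\in M^{*}$ vanishing on $MJ$ clearly satisfies $j\cdot f=0$ for each $j\in J$. Hence
\[
S(M^{*})=\{\,f\in M^{*}\mid f(MJ)=0\,\}=(MJ)^{\perp},
\]
the annihilator of $MJ$ in $M^{*}$.

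Finally I would identify $(MJ)^{\perp}$ with $(M/MJ)^{*}$ through the canonical map $\pi\colon M\to M/MJ$: pullback along $\pi$ sends $\bar f\in (M/MJ)^{*}$ to $\bar f\circ\pi\in M^{*}$, which lies in $(MJ)^{\perp}$, and any $f\in(MJ)^{\perp}$ factors uniquely through $\pi$. This gives an $F$-linear bijection, and it is $G$-equivariant because the right $G$-action on $M/MJ$ is induced from that on $M$ (here one uses that $MJ$ is $G$-stable, which follows from $gJ=Jg$ for every $g\in G$, already noted at the start of Section~2).

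There is really no hard step: the whole statement is a definition chase, and the only point that requires any care is making sure the dual action is spelled out correctly so that ``$Jf=0$'' unambiguously means $f(mj)=0$ for all $m,j$, rather than $f(jm)=0$. Once that convention is fixed, the identification $S(M^{*})=(M/MJ)^{*}$ is immediate.
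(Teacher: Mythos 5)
Your proof is correct and follows essentially the same route as the paper: both arguments unwind the dual action so that $Jf=0$ becomes $f(MJ)=0$, identify $S(M^{*})$ with $(MJ)^{\perp}$, and then with $(M/MJ)^{*}$ via the quotient map (the paper even records the $(MJ)^{\perp}$ formulation in the remark following the lemma). Your version is just more explicit about the convention $(\alpha\cdot f)(m)=f(m\alpha)$ and the $G$-equivariance of the identification, which the paper leaves implicit.
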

\begin{proof} We know that $S(M^{*})$ is a $G$-submodule of
$M^{*}$. If $f \in (M/MJ)^{*}={\rm Hom}_{F}(M/MJ, F)$, then
$f(MJ)=0$, which means $Jf(M)=0$, so $f \in S(M^{*})$. Conversely,
if $f \in S(M^{*})$, then $f(MJ)=0$ and $MJ \leq {\rm Ker}(f)$,
which induces $f \in (M/MJ)^{*}$.  \end{proof}

It is easy to know that Soc$(M^{*}) \leq S(M^{*})$.
\begin{remark}
 As usual way, we can define $U^{\bot}=\{f\in M^{*}|f(u)=0,u\in
U\}$, for $U\leq M$. Then we can prove that $S(M^{*})=(MJ)^{\bot}$,
so naturally we have $(MJ)^{\bot}=(M/MJ)^{*}$.
\end{remark}

\section{ Relations among algebra $F[G], F[N]$ and $F[G]/I$}
Let $A=F[G]$ and $I=r(J)$ as before. In this section, we study the
the structure of the algebra $A/I$, and discuss the relations among
$F[G], F[N]$, and $A/I$.

About the algebra $F[G]/I$, we have the following results to
describe it.
\begin{proposition}Let $A=F[G], N\unlhd G, J=J(F[N]),
I=r(J)=\{\alpha\in F[G]|J\alpha=0\}$,
  and as left regular module, let $A=\bigoplus_{P} P$  be a
  decomposition of principal indecomposable modules of $A$ ,
then $A/I=\bigoplus_{P} P/P\cap I$,  where $P/P\cap I$   is
principal indecomposable $A/I$-module if $P/P\cap I\neq 0$.
\end{proposition}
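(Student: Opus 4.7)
The plan is to fix primitive idempotents $e_1,\dots,e_n$ with $1=\sum_i e_i$ matching the decomposition $A=\bigoplus_i P_i$, $P_i=Ae_i$, and then transport the direct sum through $I$ using the fact (Lemma~2.1) that $I$ is a \emph{two-sided} ideal of $A$. The proposition then breaks into two independent claims: a decomposition statement for $I$ (hence for $A/I$), and an indecomposability statement for each quotient $P_i/(P_i\cap I)$.

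First I would establish that $I=\bigoplus_i (I\cap P_i)$. Given $\alpha\in I$, write $\alpha=\alpha\cdot 1=\sum_i\alpha e_i$; each summand lies in $P_i=Ae_i$, and since $I$ is a right ideal, $\alpha e_i\in I$ as well, so $\alpha e_i\in I\cap P_i$. The directness of $A=\bigoplus_i P_i$ forces the sum $\sum_i(I\cap P_i)$ to be direct. Taking quotients term by term yields $A/I=\bigoplus_i P_i/(P_i\cap I)$, and because $I$ annihilates every summand, this is also a decomposition of $A/I$-modules.

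Next I would verify that each nonzero $\bar P_i:=P_i/(P_i\cap I)$ is a principal indecomposable $A/I$-module. Writing $\bar e_i$ for the image of $e_i$ in $A/I$, the assignment $ae_i+(P_i\cap I)\mapsto \bar a\bar e_i$ is a well-defined isomorphism $\bar P_i\cong (A/I)\bar e_i$. The endomorphism ring of this module is $\bar e_i(A/I)\bar e_i$, which is the image of the local ring $e_iAe_i$ under the canonical surjection $A\to A/I$, hence either zero or local. When $\bar e_i\neq 0$ this endomorphism ring is local, so $\bar e_i$ is primitive in $A/I$ and $(A/I)\bar e_i$ is an indecomposable direct summand of $A/I$, i.e.\ a principal indecomposable $A/I$-module.

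The only potentially delicate point is the compatibility of the direct sum with passage to the quotient, which genuinely uses that $I$ is a right ideal; that is exactly what Lemma~2.1 buys us, and without two-sidedness the argument would fail. The rest is routine idempotent bookkeeping together with the standard observation that a nonzero quotient of a local ring is local.
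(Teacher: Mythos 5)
Your proof is correct, but it takes a different route from the paper's, most visibly in the indecomposability step. The paper never mentions idempotents in its proof: for the decomposition it writes $\alpha\in I$ as $\sum_P\alpha_P$ and uses directness of $A=\bigoplus_P P$ together with the definition $I=\{\alpha:J\alpha=0\}$ to force $J\alpha_P=0$; for indecomposability it computes $J(A/I)=(J(A)+I)/I$, deduces ${\rm Hd}(P/P\cap I)=P/(J(A)P+P\cap I)$, and uses Nakayama's lemma to conclude that either $P\cap I=P$ or ${\rm Hd}(P/P\cap I)={\rm Hd}(P/J(A)P)$ is simple. You instead identify $P_i/(P_i\cap I)\cong (A/I)\bar e_i$ and argue that $\bar e_i(A/I)\bar e_i$, being a nonzero quotient of the local ring $e_iAe_i$, is local, so $\bar e_i$ stays primitive. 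Your argument is cleaner and visibly works for an arbitrary two-sided ideal of an Artinian algebra, whereas the paper's head computation yields as a by-product the identification ${\rm Hd}(P/P\cap I)={\rm Hd}(P)$, which it exploits immediately in Corollary 3.3; your route would need a small extra remark to recover that. One small attribution slip: the right-ideal property of $I$ you invoke for $\alpha e_i\in I$ is automatic from the definition of $I$ as a right annihilator; what Lemma 2.1 actually supplies is that $I$ is also a \emph{left} ideal, which is what makes $A/I$ an algebra and the whole statement meaningful (and ensures $\bar e_i=0$ exactly when $P_i\subseteq I$). This does not affect the validity of your proof.
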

\begin{proof} Let $\alpha \in I, \alpha=\sum_{P}\alpha_{P}, \alpha_{P}\in
P$, Since $J\alpha=0$, so $J\alpha_{P}=0, \alpha_{P}\in P\cap I$.
Thus $I=\bigoplus_{P}P\cap I$, which implies
$A/I=\bigoplus_{P}P/P\cap I$.

In order to show that $P/P\cap I$ is indecomposable, we consider
Hd($P/P\cap I$) and prove it is a simple module. First we claim that
$J(A/I)=(J(A)+I)/I$. This is because $A/(J(A)+I)$ is semi-simple,
and if $A/M$ is simple, where $I\leq M$, then $J(A)+I \leq M$. Thus
we obtain $J(P/P\cap I)=(J(A)P+P\cap I)/P\cap I$, which shows
Hd$(P/P\cap I) = P/(J(A)P+P\cap I)$. Since $P/(J(A)P+P\cap I)$ is a
factor module of $P/J(A)P$, we have $J(A)P+P\cap I=J(A)P$ or
$J(A)P+P\cap I=P$. So $P\cap I \leq J(A)P$ or $P\cap I =P$ by
Nakayama lemma. Hence we have Hd$(P/P\cap I)=$ Hd$(P/J(A)P)$ if
$P\cap I \neq P$.
\end{proof}
\begin{remark}
 There is a primitive idempotent $e$ such that $P=Ae$ when $P$ is
left indecomposable ideal of $A$  , so we can write $P\cap I=IP=Ie$
and $P/P\cap I = Pe/Ie$.\end{remark}

 The proof of the result above induces the following result
which suggests the relations between projective modules of $F[G]$
and $F[G]/I$.
\begin{corollary} With the same notations as above, let $E=$
Hd$(P/J(A)P)$ be a simple $A$-module. Then $E$ is still a simple
$A/I$ module if and only if $P \cap I \neq P$. Furthermore, $E=$
Hd$(P/P\cap I)$. $\Box$
\end{corollary}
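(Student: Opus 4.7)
The plan is to read off the backward direction (and the ``furthermore'' clause) from the computation already carried out in Proposition 3.1, and to handle the forward direction by the contrapositive, exploiting the fact that a principal indecomposable module $P=Ae$ contains its primitive idempotent $e$, which acts as the identity on the head of $P$.

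For the backward direction, suppose $P\cap I\neq P$. In the proof of Proposition 3.1 it is shown that $\mathrm{Hd}(P/P\cap I)=P/(J(A)P+P\cap I)$, and that the dichotomy supplied by Nakayama's lemma forces either $P\cap I\subseteq J(A)P$ or $P\cap I=P$. Under our hypothesis only the first alternative is possible, so $J(A)P+P\cap I=J(A)P$ and hence
\[
\mathrm{Hd}(P/P\cap I)\;=\;P/J(A)P\;=\;E.
\]
Since $P/P\cap I$ is a principal indecomposable $A/I$-module by Proposition 3.1, its head $E$ is a simple $A/I$-module. This also establishes the ``furthermore'' clause.

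For the forward direction I would argue the contrapositive: assume $P\cap I=P$, i.e.\ $P\subseteq I$. Writing $P=Ae$ for a primitive idempotent $e$ (as observed in the remark following Proposition 3.1), we have $e\in P\subseteq I$. The class $\bar e=e+J(A)e$ is a nonzero element of $E=Ae/J(A)e$, and
\[
e\cdot\bar e\;=\;e^{2}+J(A)e\;=\;\bar e\;\neq\;0,
\]
so $e$ does not annihilate $E$. Consequently $I$ does not annihilate $E$, which means $E$ cannot carry the structure of an $A/I$-module and, in particular, cannot be a simple $A/I$-module.

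There is no real obstacle here; the only point requiring care is the observation that $e$ acts as the identity on $\mathrm{Hd}(Ae)$, which rules out $E$ being annihilated by $I$ once $e$ itself lies in $I$. Everything else is a direct transcription of the structural statements already proved in Proposition 3.1 together with its accompanying remark.
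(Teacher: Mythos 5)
Your proposal is correct. The backward direction and the ``furthermore'' clause are exactly what the paper intends: the corollary carries no written proof because it is read off from the proof of Proposition 3.1, where Nakayama's lemma gives the dichotomy $P\cap I\leq J(A)P$ or $P\cap I=P$, and in the first case ${\rm Hd}(P/P\cap I)={\rm Hd}(P/J(A)P)=E$ with $P/P\cap I$ a principal indecomposable $A/I$-module, so $E$ is simple over $A/I$. For the forward direction the paper again leaves the argument implicit (one would invoke the decomposition $A/I=\bigoplus_{P}P/P\cap I$ and the fact that every simple $A/I$-module occurs as the head of one of the nonzero summands, which are pairwise the heads of distinct $P/J(A)P$); your contrapositive via the primitive idempotent is a cleaner and more direct way to close this gap: since $P=Ae\subseteq I$ forces $e\in I$, and $e$ acts as the identity on the nonzero class $\bar e\in E=Ae/J(A)e$ (note $e\notin J(A)e$ because $e$ is a nonzero idempotent and $J(A)$ is nilpotent), $I$ does not annihilate $E$, so $E$ cannot be an $A/I$-module at all. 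So your write-up agrees with the paper where the paper argues, and supplies a slightly more elementary justification for the one direction the paper does not spell out.
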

\begin{remark}
 From this result, for a simple $F[G]/I$-module $E$, if $P$ is
the projective $F[G]$-cover of $E$, then $P/P\cap I$ is the
projective $F[G]/I$-cover of $E$. A very natural question arises:
What happens with $E$ being not simple $A/I$-module when $E$ is a
simple $G$-module? We state the following result to answer this
question.
\end{remark}
\begin{proposition} With the same notations as above, let $E={\rm
Hd}(P/J(A)P)$. Suppose $E$ lie over a simple $N$-module $U$. Then
$P\leq I$ if and only if $U$ is a  projective simple $N$-module.
\end{proposition}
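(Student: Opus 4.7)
The plan is to reformulate $P \leq I$ as a semisimplicity condition on the restriction $P_N$ of $P$ to $F[N]$, and then combine this with Clifford theory. Write $P = Ae$ for a primitive idempotent $e \in A$. Since $J = J(F[N])$ is stable under $G$-conjugation we have $AJ = JA$, whence
\[
P \leq I \iff Je = 0 \iff JP = 0 \iff J(F[N])\cdot P_N = 0,
\]
i.e.\ if and only if $P_N$ is a semisimple $F[N]$-module. The task thus reduces to showing that $P_N$ is semisimple if and only if $U$ is projective simple.

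For the forward direction, $P_N$ is always a projective $F[N]$-module, and a projective semisimple module is a direct sum of projective simple modules. The head $E$ of $P$ yields a surjection from $P_N$ onto $E_N$, and since $U$ is a summand of $E_N$, $U$ is a composition factor and hence a direct summand of the semisimple module $P_N$. As a direct summand of a projective module, $U$ is itself projective.

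For the converse, I would use Clifford theory to identify the projective-indecomposable summands of $P_N$ with projective covers (in $F[N]$-mod) of the $G$-conjugates of $U$. The key point is that for a simple $F[N]$-module $V$, the Frobenius reciprocity isomorphism $\mathrm{Hom}_{F[N]}(P_N, V) \cong \mathrm{Hom}_{F[G]}(P, V^G)$ is nonzero precisely when $E$ is a composition factor of the induced module $V^G$, and Clifford's theorem applied to $E$ then forces any such $V$ to be $G$-conjugate to $U$ (because $V^G{\downarrow_N}$ is a direct sum of $G$-conjugates of $V$, while composition factors of $E{\downarrow_N}$ form the $G$-orbit of $U$). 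Since projectivity of a simple $F[N]$-module is preserved under $G$-conjugation, the hypothesis that $U$ is projective implies that the projective cover of each $U^g$ in $F[N]$-mod is $U^g$ itself, so every indecomposable summand of $P_N$ is already simple. Hence $P_N$ is semisimple, and $P \leq I$. The main obstacle is this last Clifford-theoretic step, which rests on the fact that the composition factors of $E_N$ form a single $G$-orbit of simple $F[N]$-modules and on computing $V^G$ restricted to $N$ via Mackey-type considerations.
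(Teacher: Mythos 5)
Your proof is correct, but its mechanics differ from the paper's, most noticeably in the converse direction. In the forward direction the paper uses that $F[G]$ is a symmetric algebra, so ${\rm Soc}(P)\cong{\rm Hd}(P)=E$ and hence $E_{\downarrow N}$ sits inside $P_{\downarrow N}$; you instead use the surjection $P_{\downarrow N}\to E_{\downarrow N}$ together with semisimplicity and projectivity of $P_{\downarrow N}$, which is slightly more economical since it needs no symmetric-algebra input. For the converse the paper argues ``upwards'': from Clifford's description $E=F[G]\otimes_{T}\overline{U}$ it builds an explicit surjection $U^{\uparrow G}\to E$, deduces from projectivity of $U$ that $P\mid U^{\uparrow G}$, and then restricts to $N$ to see that $P_{\downarrow N}$ is a direct sum of (projective simple) conjugates of $U$. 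You argue ``downwards'': $P_{\downarrow N}$ is a projective $F[N]$-module, the head $V$ of any indecomposable summand satisfies ${\rm Hom}_{F[N]}(P_{\downarrow N},V)\cong{\rm Hom}_{F[G]}(P,V^{\uparrow G})\neq 0$, so $E$ is a composition factor of $V^{\uparrow G}$, and Clifford's theorem (comparing the composition factors of $E_{\downarrow N}$ with those of $(V^{\uparrow G})_{\downarrow N}$) forces $V$ to be $G$-conjugate to $U$; since projectivity passes to conjugates, each summand is its own projective cover, i.e.\ simple, giving $JP=0$. Both routes are sound and both rest on the same Clifford/Frobenius reciprocity toolkit. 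What the paper's route buys is the intermediate fact $P\mid U^{\uparrow G}$, which is reused verbatim in the proof of Theorem 4.1; what your route buys is the clean reduction $P\leq I\iff P_{\downarrow N}$ semisimple and avoidance of the explicit construction of the map $F[T]\otimes_{N}U\to\overline{U}$.
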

\begin{proof} If $P\leq I$, then $J(F[N])P=0$, so $P_{\downarrow N}$ is
a direct sum of projective simple $N$-modules. Since $E_{\downarrow
N}={\rm Soc}(P)_{\downarrow N}\leq P_{\downarrow N}$, $E_{\downarrow
N}|P_{\downarrow N}$, so $U|P_{\downarrow N}$ and $U$ is projective.

Conversely, let $T$ is the inertial group of $U$ in $G$. By Clifford
theory, we have $E=F[G]\otimes_{T} \overline{U}$, where
$\overline{U}=\sum_{t\in T}tU$. We can write $
\overline{U}=\sum_{t\in T}tU=\sum_{t\in T/N}tU$ and $F[T]\otimes_{N}
U=\oplus_{t\in T/N}t\otimes U$, then we set a map $f:F[T]\otimes_{N}
U \longrightarrow \overline{U}$ by $\sum_{t\in T/N}t\otimes
u_{t}\mapsto \sum_{t\in T/N}tu_{t}$, which is a surjective
$T$-homomorphism. Thus we get a surjective $G$-homomorphism
$$1\otimes f: F[G]\otimes U\longrightarrow F[G]\otimes \overline{U}=E$$
Since $U$ is projective $N$-module and $P$ is the projective cover
of $E$, then $P|F[G]\otimes U =U^{\uparrow G}$. $(U^{\uparrow
G})_{\downarrow N}=\bigoplus_{g\in G/N}g\otimes U$ is the sum of
projective simple $N$-modules, so is $P_{\downarrow N}$. Thus
$J(F[N])P=0, P\leq I$.

\end{proof}

The result above tells us an important fact is : When $P$ is a
principal indecomposable module of $G$, then $P_{\downarrow N}$ is
semi-simple if and only if Hd$(P)$ lie over a projective simple
$N$-module.

According to Proposition 3.1, there exists  a principal
indecomposable $G$-module $P$ such that $P\leq I$ if and only if the
number of classes of isomorphic simple $A/I$-modules is smaller than
that of isomorphic simple $G$-modules, which is the dimension ${\rm
Dim}_{F}Z(A/J(A))$ of the center $Z(A/J(A))$ of $A/J(A)$ when $F$ is
the splitting field of $A$. Hence we give the following two results,
which offer  conditions for  existence of $p$-block of defect zero
in $F[N]$.
\begin{corollary} Suppose $A=F[G]$ and $F$ is the splitting field of $A$.
Let $N$ be a normal subgroup of $G$, $J=J(F[N])$, and $I=\{\alpha\in
F[G]|J\alpha =0\}$. Then there not exist projective simple
$N$-modules if and only if $I\leq J(A)$.
\end{corollary}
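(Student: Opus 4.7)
The plan is to build the equivalence as a chain through an intermediate statement about principal indecomposable modules (PIMs) of $A = F[G]$: namely, $I \leq J(A)$ if and only if no PIM of $A$ is contained in $I$, and the latter condition is exactly the nonexistence of projective simple $N$-modules by Proposition 3.3.

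First I would invoke Proposition 3.3: for any PIM $P$ of $A$, $P \leq I$ holds if and only if $\mathrm{Hd}(P)$ lies over a projective simple $N$-module. Since every simple $N$-module is a constituent of $E_{\downarrow N}$ for some simple $G$-module $E = \mathrm{Hd}(P)$ (by Clifford theory), the existence of a projective simple $N$-module is equivalent to the existence of some PIM $P$ of $A$ with $P \leq I$. This reduces the corollary to showing $I \leq J(A)$ iff no PIM of $A$ lies in $I$.

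For the forward direction, suppose $I \leq J(A)$ and, for contradiction, that some PIM $P = Ae$ (with $e$ a primitive idempotent) satisfies $P \leq I$. Then $e \in I \subseteq J(A)$, but $J(A)$ is nilpotent in the finite-dimensional algebra $A$, hence contains no nonzero idempotent — contradicting the primitivity of $e$. For the reverse direction, decompose $A = \bigoplus P$ as a sum of PIMs; by the argument already carried out in the proof of Proposition 3.1, one has $I = \bigoplus (P \cap I)$ and for each summand either $P \cap I = P$ or $P \cap I \leq J(A) P$. If no PIM is contained in $I$, the second alternative holds uniformly, so $I \leq \bigoplus J(A) P = J(A)$.

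The main obstacle, such as it is, is recognizing that the dichotomy "$P\cap I = P$ or $P\cap I \leq J(A)P$" established inside Proposition 3.1 is exactly the tool needed; once this is noticed, the two directions are short. There is no need to invoke the splitting-field hypothesis directly in the argument — it enters only implicitly through the counting remark preceding the corollary, which motivates the equivalence but is not used in the formal proof.
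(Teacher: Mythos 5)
Your argument is correct, and it takes a genuinely different route from the paper's. The paper proves Corollary 3.6 by a dimension count: it compares $\dim_F Z(A/(I+J(A)))$ with $\dim_F Z(A/J(A))$, identifies these dimensions with the numbers of isomorphism classes of simple $A/I$-modules and of simple $A$-modules respectively (this is where the splitting-field hypothesis is actually needed), and then invokes the equivalence, extracted from Proposition 3.1, between ``some $P\leq I$'' and the first number being strictly smaller. Your proof instead works directly at the module level: for the implication $I\leq J(A)\Rightarrow$ no PIM inside $I$, you use the clean idempotent argument ($P=Ae\leq J(A)$ would put a nonzero idempotent inside a nilpotent ideal); for the converse you reuse the decomposition $I=\bigoplus_P(P\cap I)$ and the dichotomy $P\cap I\leq J(A)P$ or $P\cap I=P$ already established inside the proof of Proposition 3.1, which gives $I\leq J(A)$ once the second alternative is excluded. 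The bridge to projective simple $N$-modules is the same in both treatments (it is the proposition about $P\leq I$ iff $\mathrm{Hd}(P)$ lies over a projective simple $N$-module together with the standard Clifford-theoretic fact that every simple $N$-module is lain over by some simple $G$-module); note that this is Proposition 3.5 in the paper's numbering, not 3.3. What your approach buys is that it is more elementary and, as you observe, does not rely on the splitting-field hypothesis, which in the paper's argument is used to equate the dimension of the center of a semisimple quotient with the count of simple modules; the paper's version, on the other hand, fits more naturally into the surrounding discussion of $\dim_F Z(A/J(A))=l(G)$.
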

\begin{proof} Notice that $J(A/I)=(I+J(A))/I$ and let $\overline{A}=A/I$, then $${\rm Dim
}_{F}Z(A/(I+J(A)))={\rm
Dim}_{F}Z(\overline{A}/J(\overline{A})),$$which is the number of
classes of isomorphic simple $A/I$-modules.  Since $${\rm
Dim}_{F}Z(A/J(A))=l(G)$$ when $A=F[G]$, the assertion holds.
\end{proof}

\section{relations between blocks of $G$ and $F[G]/I$ }
 Now we are going to
prove some results for  block algebras of $G$ and $F[G]/I$.
\begin{theorem} Let $P_{i}, P_{j}$ be any two principal
indecomposable modules belonging to the same $p$-block of $G$. Then
$P_{i}\leq I$ if and only if $P_{j}\leq I$.
\end{theorem}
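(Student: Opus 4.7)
The plan is to translate the condition $P\le I$ into a statement about the block of $N$ lying under $\mathrm{Hd}(P)$, and then quote Clifford theory for blocks. First I would recall, from Proposition 3.4 together with the remark that follows it, that for a principal indecomposable $G$-module $P$ one has $P\le I$ if and only if $\mathrm{Hd}(P)$ lies over a projective simple $F[N]$-module. Since a simple $F[N]$-module is projective exactly when it constitutes (by itself) a $p$-block of $N$ of defect zero, the condition $P\le I$ is equivalent to: the simple head $\mathrm{Hd}(P)$ lies over a block of $N$ of defect zero.

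Next I would set $E_s=\mathrm{Hd}(P_s)$ for $s=i,j$, pick a simple $F[N]$-constituent $U_s$ of $(E_s)_{\downarrow N}$, and let $b_s$ be the block of $N$ containing $U_s$. Since $E_i$ and $E_j$ both belong to the same $G$-block $B$, both $b_i$ and $b_j$ are covered by $B$. By the standard covering theory (Fong--Reynolds), $G$ acts transitively by conjugation on the set of blocks of $N$ covered by $B$; hence $b_j=b_i^{g}$ for some $g\in G$. Conjugation by $g$ is a ring automorphism of $F[N]$ carrying defect groups to defect groups, so $b_i$ has defect zero if and only if $b_j$ does. Chaining the equivalences gives $P_i\le I\iff b_i$ has defect zero $\iff b_j$ has defect zero $\iff P_j\le I$.

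The only real step to watch is the invocation of $G$-conjugacy of covered $N$-blocks, together with the (well-known) fact that the defect of a block of $N$ is preserved under conjugation by elements of $G$. Once these are in hand, the theorem reduces to a one-line chain of equivalences, and no further calculation is needed.
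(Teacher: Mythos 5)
Your proof is correct, but it takes a genuinely different route from the paper's. You translate $P\le I$ into a defect-zero condition on the block of $N$ lying under $\mathrm{Hd}(P)$ (via the paper's Proposition 3.5 --- your reference to ``3.4'' is a numbering slip --- together with the standard fact that a simple $F[N]$-module is projective exactly when its block has defect zero), and you then finish by quoting block covering theory: the blocks of $N$ covered by a fixed block $B$ of $G$ form a single $G$-conjugacy orbit, and conjugation preserves defect. (Strictly speaking the transitivity statement is classical covering theory, e.g.\ Nagao--Tsushima, rather than the Fong--Reynolds correspondence proper, but the fact you need is standard and your chain of equivalences is sound; note also that the choice of simple constituent $U_s$ of $(E_s)_{\downarrow N}$ is immaterial since all constituents are $G$-conjugate.) The paper instead stays at the module level and keeps the argument self-contained: it uses that any two principal indecomposable modules in the same block are linked, so it may assume $(P_i,P_j)^G\neq 0$; restricting to $N$, writing $(P_i)_{\downarrow N}$ as a direct sum of conjugates of the projective simple module $U_i$ (as in the proof of Proposition 3.5), and analysing $\mathrm{Soc}\bigl((P_j)_{\downarrow N}\bigr)$, it concludes that some conjugate of $U_i$ is isomorphic to a conjugate of $U_j$, whence $U_j$ is projective. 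In effect the paper proves by hand, in exactly the case needed, the conjugacy statement you import from covering theory; your version is shorter if one grants the standard machinery, while the paper's is more elementary and independent of block-covering results.
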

\begin{proof} Since any two principal indecomposable modules are
linked \cite{lan} if they are in the same block, we only need to
suppose ${\rm Hom}_{F[G]}(P_{i},P_{j})=(P_{i},P_{j})^{G}\neq 0$.

Let $E_{i}={\rm Hd}(P_{i}), E_{j}={\rm Hd}(P_{j})$ and  they are
supposed to lie over simple $N$-modules $U_{i}$ and $U_{j}$,
respectively. By Proposition 3.5, it is sufficient to prove: if
$U_{i}$ is projective, then $U_{j}$ is projective too.

$0\neq(P_{i}, P_{j})^{G}\leq (P_{i}, P_{j})^{N}$ yields $(P_{i},
P_{j})^{N}\neq 0$. In the proof of Proposition 3.5, we have proved
$P_{i}|(U_{i})^{\uparrow G}$, and consequently $(P_{i})_{\downarrow
N}=\oplus(g\otimes U_{i})$, a direct sum of projective simple
$N$-modules. Then $$(P_{i}, P_{j})^{N}=(\oplus(g\otimes U_{i}), {\rm
Soc}((P_{j})_{\downarrow N}))^{ N}.$$ Next we claim that:

\emph{for a simple $N$-module $U$,  $U|(E_{j})_{\downarrow N}$ if
and only if $U | {\rm Soc}((P_{j})_{\downarrow N}).$}

As $E_{j}={\rm Soc}(P_{j})\leq P_{j}$, $(E_{j})_{\downarrow N}\leq
{\rm Soc}((P_{j})_{\downarrow N})$. If $E$ is a simple $G$-module
lie over a simple $N$-module $U$ such that $U|{\rm
Soc}((P_{j})_{\downarrow N})$, then $E=\sum_{g\in G} gU \leq P_{j}$
and so $E=E_{j}$. Hence $U|(E_{j})_{\downarrow N}$.

Hence ${\rm Soc}(P_{j})_{N}$ and ${\rm Soc}\left((P_{j})_{N}\right)$
are decomposed a sum of simple $N$-modules which are conjugate to
the same simple $N$-module. Finally, since
$$0\neq (P_{i},P_{j})^{N}=(\oplus(g\otimes U_{i}), {\rm
Soc}((P_{j})_{\downarrow N}))^{ N}$$ so$$(\oplus(g\otimes U_{i}),
(E_{j})_{\downarrow N})^{ N}\neq 0$$ by the argument we claim above.
Therefore  there exist some $g, x\in G, g\otimes U_{i}\cong x\otimes
U_{j}$. So $U_{j}$ is projective as $U_{i}$ is projective.
\end{proof}
\begin{remark}
 By the result above, if a principal indecomposable module $P$ is
in a $p$-block algebra $B$, then $P\leq I$ if and only if $B\leq I$.
\end{remark}

We consider relations between $I$ and a block $B$ of $G$, then the
previous results Proposition 3.1 and Theorem 4.1 tell us that $B\leq
I$ or $B\cap I =Ie_{B}\leq J(B)=BJ(F[G]) =J(F[G])e_{B}$, where
$e_{B}$ denote the block idempotent of $B$. For a block $b$ of $N$,
let Bl$(G|b)$ denote all blocks of $G$ which lie over $b$. Let
Bl$(N|0)$ denote all blocks of $N$ of defect zero.  Thus we have the
following result to describe $I$.

\begin{corollary}Let $N\unlhd G$ and $N_{p}$ denote the set of all
$p$-elements of $N$. Let $c=\sum_{x\in N_{p}}x$. Then
$$I=F[G]c^{2}\bigoplus( \bigoplus_{e_{B}\not\in I}Ie_{B}).$$
In particular, $I\leq J(F[G])$ if and only if $c^{2}=0$.
\end{corollary}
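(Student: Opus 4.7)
The plan is to refine the block-by-block decomposition of $I$ provided by Proposition~3.1 and Theorem~4.1, and then to identify the part of $I$ coming from blocks contained in $I$ with $F[G]c^{2}$. Since $1=\sum_{B}e_{B}$ is an orthogonal decomposition into central idempotents and $I$ is a two-sided ideal (Lemma~2.1), one has $I=\bigoplus_{B}Ie_{B}$. When $e_{B}\in I$, the inclusion $F[G]e_{B}\subseteq I$ forces $Ie_{B}=F[G]e_{B}=B$; when $e_{B}\notin I$, the remark preceding the corollary gives $Ie_{B}\subseteq J(B)=J(F[G])e_{B}$. Hence the decomposition reduces to proving
$$F[G]c^{2}=\bigoplus_{e_{B}\in I}B.$$

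The key point is the computation of $c^{2}$ inside $F[N]$. Because $N_{p}$ is stable under $G$-conjugation, $c$ is central in $F[G]$; and by Tsushima's theorem (the same result invoked in Remark~2.2), $c\in\mathrm{Soc}(F[N])$, i.e.\ $Jc=0$. I decompose $c=\sum_{b}ce_{b}$ across the block idempotents of $F[N]$. If $b$ has defect zero, then $b\cong M_{n}(F)$ has centre $Fe_{b}$, so $ce_{b}=\omega_{\chi}(c)\,e_{b}$ where $\chi$ is the unique ordinary character of $b$; since $\chi$ vanishes on every non-trivial $p$-element,
$$\omega_{\chi}(c)=\frac{1}{\chi(1)}\sum_{x\in N_{p}}\chi(x)=\frac{\chi(1)}{\chi(1)}=1,$$
so $ce_{b}=e_{b}$. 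If $b$ has positive defect, then $\mathrm{Soc}(b)\subseteq J(b)\subseteq J(F[N])$, so $ce_{b}\in\mathrm{Soc}(b)\subseteq J(F[N])$; the equality $c\cdot J(F[N])=0$ then gives $c^{2}e_{b}=c\cdot(ce_{b})=0$. Summing the two cases,
$$c^{2}=\sum_{b}c^{2}e_{b}=\sum_{b\text{ of defect }0}e_{b}=:\Delta,$$
a central idempotent of $F[G]$.

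Now Proposition~3.5 combined with Theorem~4.1 tells us that a block $B$ of $G$ satisfies $e_{B}\in I$ precisely when $B$ covers some defect-zero block of $N$, so $F[G]\Delta=\bigoplus_{b\text{ of defect }0}F[G]e_{b}=\bigoplus_{e_{B}\in I}F[G]e_{B}=\bigoplus_{e_{B}\in I}B$, and the main decomposition follows. For the ``in particular'' clause, the second summand lies in $J(F[G])$ automatically, so $I\subseteq J(F[G])$ if and only if $F[G]c^{2}\subseteq J(F[G])$; since $c^{2}=\Delta$ is a central idempotent, this happens exactly when $\Delta=0$, i.e.\ $c^{2}=0$. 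The main technical obstacle is the identification $c^{2}=\Delta$: one needs both the central-character value $\omega_{\chi}(c)=1$ on defect-zero blocks and the socle property $Jc=0$ (to kill the positive-defect cross terms upon squaring); the rest is bookkeeping via the block decompositions already established in Sections~3 and~4.
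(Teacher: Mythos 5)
Your overall architecture is the same as the paper's: decompose $I$ block by block ($I=\bigoplus_{B}Ie_{B}$, with $Ie_{B}=F[G]e_{B}$ when $e_{B}\in I$ and $Ie_{B}\leq J(B)$ otherwise), identify $\bigoplus_{e_{B}\in I}F[G]e_{B}$ with $F[G]c^{2}$ via the identity $c^{2}=\sum_{b\in\mathrm{Bl}(N|0)}e_{b}$ in $F[N]$, pass from blocks of $N$ to blocks of $G$ by the covering correspondence (the same use of $\mathrm{Tr}^{G}_{T(b)}(e_{b})=\sum_{B\in\mathrm{Bl}(G|b)}e_{B}$ that the paper makes), and settle the ``in particular'' clause by noting that the non-idempotent part of $I$ already lies in $J(F[G])$ while a nonzero idempotent cannot. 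The one genuine divergence is that the paper simply cites Tsushima's defect-zero theorem for $c^{2}=\sum_{b\in\mathrm{Bl}(N|0)}e_{b}$, whereas you attempt to derive it.

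That derivation has a gap at its pivotal step. You assert $c\in\mathrm{Soc}(F[N])$, i.e. $J(F[N])c=0$, ``by Tsushima's theorem (the same result invoked in Remark~2.2).'' But the result invoked in Remark~2.2 only says that $I$ is generated by \emph{some} central element of $F[G]$; it neither identifies that generator with $c=\sum_{x\in N_{p}}x$ nor says anything about whether this particular element annihilates $J(F[N])$. The claim $J(F[N])c=0$ is in fact true, but it is itself a nontrivial theorem (it is the statement that the sum over the $p$-section of $1$ kills the radical, which in the standard treatments rests on the invertibility modulo $p$ of the table of irreducible Brauer characters), and it is of essentially the same depth as the identity $c^{2}=\sum_{b\in\mathrm{Bl}(N|0)}e_{b}$ you want to prove -- indeed it is exactly what Tsushima's defect-zero paper supplies. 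So as written your argument is not self-contained: you must either cite the correct result (either $J(F[N])c=0$, or directly $c^{2}=\sum_{b\in\mathrm{Bl}(N|0)}e_{b}$, as the paper does) or prove it. The remaining ingredients of your proposal are correct and well used: $c$ is central in $F[G]$ since $N_{p}$ is $G$-stable; on a defect-zero block the central character value of $c$ is $1$ because the unique ordinary character vanishes on nontrivial $p$-elements; $\mathrm{Soc}(b)\leq J(b)$ for a block of positive defect since such a block has no simple projective module; and the bookkeeping identifying $F[G]c^{2}$ with $\bigoplus_{e_{B}\in I}F[G]e_{B}$ via Proposition~3.5 and Theorem~4.1 matches the paper's proof.
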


\begin{proof}By the previous results, we can write $$
I=(\bigoplus_{e_{B}\in I}F[G]e_{B} )\bigoplus
(\bigoplus_{e_{B}\not\in I}Ie_{B}),$$so we just need to prove
$\bigoplus_{e_{B}\in I}F[G]e_{B} =F[G]c^{2}$.

Notice $e_{B}\in I$ if and only if $B$  lie over a block $b$ of $N$
with defect zero. By Tsushima\cite{tsus}, $c^{2}$ is the sum of all
block idempotents $e_{b}$ in $F[N]$ with defect zero. Hence we have
$$\begin{array}{lll}c^{2} & = & \sum_{b\in
\textrm{Bl}(N|0)}e_{b}\\ & = &
\sum_{b_{i}}\sum_{b_{i}^{t}}e_{b_{i}}^{t}\\
&  & (\textrm{where $b_{i}$ denote the representive of $G$-orbit of Bl($N|0$)})\\
& = &\sum_{b_{i}}\textrm{Tr}_{T(b_{i})}^{G}(e_{b_{i}}) \\
&  & (\textrm{sum of some central idempotents in $G$} )\\
& = & \sum_{b_{i}}\sum_{B \in \textrm{Bl}(G|b_{i})}e_{B}\\& = &
\sum_{B \leq I}e_{B}
\end{array}$$
which implies our result.

\end{proof}
 If $N$ is a $p$-solvable group, then we can prove $I=F[G]c$ from
 result in \cite[Chp $X$,P422]{feit}. So $I\leq J(F[G])$ if and only
 if $c\in J(F[G])$. If $N$ is not $p$-solvable group, the Corollary 4.3
 still implies $I\leq J(F[G])$ if and only
 if $c\in J(F[G])$, as $c^{2}$ is an idempotent. Thus Corollary 4.3
 is conformed with  Corollary 3.6.

An immediate consequence of the results above is
\begin{theorem} Let $F[G]=A=\bigoplus_{i}B_{i}$ be a
decomposition of $p$-block algebra of $A$. $\overline{B_{i}}$
denotes the image of $B_{i}$ under the natural map:$A\rightarrow
A/I$. Then
\begin{enumerate}
\item $\overline{B_{i}}=0$ if and only if $(B_{i})_{\downarrow N}$
is the sum of $p$-blocks of defect zero in $F[N]$. \item if
$\overline{B_{i}}\neq 0$, then  the  simple $G$-modules of $B_{i}$
are also the simple $A/I$-modules of $\overline{B_{i}}$ \item $A/I =
\bigoplus_{i}\overline{B_{i}}$, where $\overline{B_{i}}=0$ or
$B_{i}/B_{i}\cap I$ with $B_{i}\cap I\leq J(B_{i})$.       $\Box$
\end{enumerate}
\end{theorem}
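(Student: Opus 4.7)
The plan is to derive all three items from the structural results established in Sections 3 and 4. The common hinge is the observation $\overline{B_i}=0 \Longleftrightarrow B_i \leq I \Longleftrightarrow e_{B_i}\in I$, which by Theorem 4.1 and Remark 4.2 is also equivalent to every principal indecomposable summand $P$ of $B_i$ satisfying $P\leq I$.

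For item (1), I would combine this observation with Corollary 4.3, which characterizes $e_B\in I$ as $B$ covering some $p$-block of $F[N]$ of defect zero. Because the $N$-blocks covered by $B_i$ form a single $G$-orbit and defect-zero blocks are preserved under $G$-conjugation, once one covered block has defect zero, they all do. To convert this to the module-theoretic assertion, I would argue both directions: if $B_i \leq I$ then $J(F[N])B_i=0$, so $(B_i)_{\downarrow N}$ is semisimple; since $F[G]$ is free as an $F[N]$-module, the summand $B_i$ is also $F[N]$-projective, hence a sum of projective simple $F[N]$-modules, i.e.\ lies in defect zero blocks of $F[N]$. Conversely, if $(B_i)_{\downarrow N}$ is a sum of defect zero block algebras of $F[N]$, then $J(F[N])$ annihilates it, giving $B_i \leq I$.

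For item (2), assume $\overline{B_i}\neq 0$. Theorem 4.1 then forces every principal indecomposable summand $P$ of $B_i$ to satisfy $P\not\leq I$, so Corollary 3.3 says $\mathrm{Hd}(P/J(A)P) = \mathrm{Hd}(P/(P\cap I))$ remains simple over $A/I$; letting $P$ range over a complete set of isomorphism classes inside $B_i$ yields exactly the simple $A/I$-modules appearing in $\overline{B_i}$. Item (3) follows by combining the $p$-block decomposition $A=\bigoplus_i B_i$, which descends via the quotient map to $A/I = \bigoplus_i \overline{B_i}$, with Proposition 3.1: when $\overline{B_i}\neq 0$ one has $\overline{B_i} \cong B_i/(B_i\cap I)$, and decomposing $B_i$ into its principal indecomposable summands $P$ gives $B_i\cap I = \bigoplus (P\cap I)$; the argument of Proposition 3.1 (each $P\not\leq I$ forces $P\cap I \leq J(A)P$) then sums to $B_i\cap I \leq J(A)B_i = J(B_i)$.

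The main obstacle I anticipate is the module-theoretic reformulation in item (1): Corollary 4.3 already supplies a clean idempotent-level criterion, but translating ``covering only defect zero $N$-blocks'' into ``$(B_i)_{\downarrow N}$ decomposes as a sum of defect zero block algebras of $F[N]$'' leans on the auxiliary facts that $B_i$ is $F[N]$-projective and that defect zero $N$-blocks are precisely those whose simple modules are projective. The remainder of the theorem is essentially bookkeeping, re-assembling the principal-indecomposable statements of Proposition 3.1, Corollary 3.3 and Theorem 4.1 along the block decomposition.
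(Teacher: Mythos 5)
Your proposal is correct and follows essentially the same route as the paper, which states this theorem as an immediate consequence of Proposition 3.1, Corollary 3.3, Proposition 3.5, Theorem 4.1 and Corollary 4.3 (indeed the paper's own remark before Corollary 4.3 already records the dichotomy $B\leq I$ or $B\cap I=Ie_{B}\leq J(B)$). Your extra care in item (1), using $F[N]$-projectivity of $B_i$ to pass between the idempotent criterion and the semisimple restriction, is exactly the intended reading and introduces no gap.
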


\section{ An algebraic decomposition of Cartan matrix}

In the following, we are going to give a relation among the Cartan
matrices of $F[G]$, $F[G/N]$ and $F[G]/r(J(F[N]))$. Now let
$C_{G}=(c_{ij})$ denote the Cartan matrix of $G$ and
$C_{\overline{G}}=(\overline{c_{ij}})$ denote the Cartan matrix of
$F[G/N]$. For simple $F[G/N]$-module $E_{i}$, let $\overline{P_{i}}$
 and $P_{i}$ denote the projective $F[G/N]$--cover and $F[G]$-cover of $E$, respectively. Then by
\cite[Chapter 2, \S 11]{lan}, we have $\overline{P_{i}}\cong
P_{i}/P_{i}J$ as $F[G]$-module, where $J=J(F[N])$.

 To understand our proof in the following result, we need to
know the basic fact: as Frobenius algebra, $_{A}A\cong (A_{A})^{*}$,
so if $\{P\}$ is right principle indecomposable module, then
$\{P^{*}\}$ is left principle indecomposable module.

 In order to make our discussion more clearly, We divide the set of
 all right principal indecomposable modules(up to isomorphisms) of the group $G$ into two
 subsets: let $$S_{1}=\{P \;|\;{\rm Hd}(P)\; {\rm a \;simple\; module \;of} \;G/N\}$$ and
 $$S_{2}=\{P\;|\;
 {\rm Hd}(P) {\rm \; a\; simple \; module \;} \not\in G/N\}.$$ We
  describe the difference between $S_{1}$ and $S_{2}$ as follows:

\begin{proposition} Let $J=J(F[N])$ be the Jacoboson radical of $F[N]$ and
$A_{N}$ the augmentation ideal of $F[N]$.  1) If $P \in S_{1}$, then
$PJ=PA_{N}$; 2) If $P \in S_{2}$,  then  $PA_{N}=P$.
\end{proposition}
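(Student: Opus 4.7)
The plan is to establish two preparatory facts before treating each case. First, since $F[N]/A_N\cong F$ is simple, $A_N$ is a maximal ideal of $F[N]$, hence $J\subseteq A_N$, and so $PJ\subseteq PA_N$ holds \emph{unconditionally}. Second, because $N\unlhd G$ the augmentation ideal $A_N$ is $G$-invariant under conjugation; consequently $F[G]A_N=A_N F[G]$ is a two-sided ideal of $F[G]$, equal to the kernel of the augmentation map $F[G]\to F[G/N]$. In particular $PA_N$ is a right $F[G]$-submodule of $P$, and the quotient $P/PA_N$ inherits the structure of a right $F[G/N]$-module.

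For part~2, I would argue by contradiction: assume $PA_N\neq P$. Since $P$ is an indecomposable right $F[G]$-module with simple head $\mathrm{Hd}(P)$, Nakayama's lemma forces every proper submodule into $\mathrm{rad}(P)=PJ(F[G])$, so every nonzero quotient of $P$ surjects onto $\mathrm{Hd}(P)$. Applied to $P/PA_N$ this gives a surjection onto $\mathrm{Hd}(P)$ that factors through an $F[G/N]$-module; hence $\mathrm{Hd}(P)$ would be a simple $F[G/N]$-module, placing $P$ in $S_1$, a contradiction. Therefore $PA_N=P$.

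For part~1, I would invoke the cited Landrock isomorphism $\overline{P}\cong P/PJ$, where $\overline{P}$ is the projective $F[G/N]$-cover of $\mathrm{Hd}(P)$, which applies precisely when $P\in S_1$. Since $\overline{P}$ is an $F[G/N]$-module, $N$ acts trivially on $P/PJ$; equivalently $A_N$ annihilates $P/PJ$, whence $PA_N\subseteq PJ$. Combining this with the unconditional inclusion $PJ\subseteq PA_N$ from the set-up yields $PJ=PA_N$.

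The only delicate point is verifying that $PA_N$ is a right $F[G]$-submodule rather than merely a right $F[N]$-submodule, which requires $N\unlhd G$ and uses the $G$-invariance of $A_N$ under conjugation. Once that is in place the two cases fall out immediately: case~2 from Nakayama together with the indecomposability of $P$, and case~1 from the cited identification $P/PJ\cong\overline{P}$. I do not anticipate any genuine obstacle beyond ensuring that the head of an indecomposable projective controls the behaviour of all of its nonzero quotients.
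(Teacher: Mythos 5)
Your proposal is correct and follows essentially the same route as the paper: part~2 is the same contradiction argument (a proper submodule of the indecomposable projective $P$ lies in $PJ(F[G])$, so $P/PA_{N}$ would exhibit $\mathrm{Hd}(P)$ as a $G/N$-module), and part~1 rests on the same Landrock identification $P/PJ\cong\overline{P}$ that the paper cites. The only difference is that where the paper disposes of part~1 by citing Landrock outright, you derive it explicitly from that isomorphism together with the inclusion $J\subseteq A_{N}$, and you also spell out the $G$-invariance of $A_{N}$ making $PA_{N}$ an $F[G]$-submodule --- details the paper leaves implicit but which are entirely sound.
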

\begin{proof}
\begin{enumerate}
\item  In fact it was proved in \cite[Chapter 2, \S 11]{lan}.

\item Let $E={\rm Hd}(P)$ for $P\in S_{2}$, then $E \not\in G/N$.
If $PA_{N}\neq P$, then $PA_{N}\leq PJ(F[G])$. Let
$\bar{P}=P/PA_{N}$, then $\bar{P}$ is a $G/N$-module which has
Hd($P$) in $G/N$, a contradiction.
\end{enumerate}
\end{proof}

We let the Cartan matrix $C=(c_{ij})$  of $F[G]$ be decided by the
set of right principal indecomposable modules:$P_{1}, P_{2}, \ldots,
P_{l}$. Then
$$ \begin{array}{lll} c_{ij}& = &{\rm
Dim}_{F}(P_{j}, P_{i})^{G}\\ & = &{\rm Dim}_{F}(P_{i},
P_{j})^{G}\\ & = & {\rm Dim}_{F}(P_{j}^{*}, P_{i}^{*})^{G}\\
& = & {\rm Dim}_{F}(P_{i}^{*}, P_{j}^{*})^{G}, \end{array}$$ that is
to say $c_{ij}=c_{ji}=c_{ij}^{*}=c_{ji}^{*}$\cite{some}. Let
$(\overline{c_{ij}})$ be the corresponding Cartan matrix of
$F[G]/F[G]J$ decided by $\{P_{i}/P_{i}J\}$, where $J=J(F[N])$. Then
$\overline{c_{ij}}={\rm Dim}_{F}(P_{j}/P_{j}J, P_{i}/P_{i}J)^{G}$.
Since $F[G]/F[G]J$ is also a symmetric algebra, so
$\overline{c_{ij}}=\overline{c_{ji}}=\overline{c_{ij}}^{*}=\overline{c_{ji}}^{*}$.
The following result helps us to calculate $\overline{c_{ij}}^{*}$.

\begin{lemma} With the same notations as above, $N$ is a normal subgroup of $G$. Let $P$ is the right
principal indecomposable module with head ${\rm Hd}(P)$ in
$F[G]/F[G]J$, where $J=J(F[N])$. Then $(P/PJ)^{*}=P^{*}/JP^{*}$.
\end{lemma}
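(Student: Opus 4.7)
The plan is to identify both sides as left principal indecomposable modules (PIMs) of the symmetric algebra $\overline{A} = F[G]/F[G]J$ and to show they share the same simple head. Applying Lemma 2.2 to the right $A$-module $P$ gives $(P/PJ)^{*} = S(P^{*}) = \{f \in P^{*} \mid Jf = 0\}$, which is annihilated on the left by $J$ and hence by $F[G]J$, so it is naturally a left $\overline{A}$-module. Since $F[G]\cdot P^{*} = P^{*}$, one also has $JP^{*} = (F[G]J)\cdot P^{*}$, so $P^{*}/JP^{*}$ is a left $\overline{A}$-module as well.

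Write $P = eA$ for a primitive idempotent $e$. Because $F[G]J$ is nilpotent (noted in Section~2), idempotent lifting makes the image $\overline{e}$ a primitive idempotent of $\overline{A}$, so $P/PJ = \overline{e}\,\overline{A}$ is a right PIM of $\overline{A}$. Since $\overline{A}$ is symmetric by Remark 2.3, the $F$-linear dual of a right PIM is a left PIM, whence $(P/PJ)^{*}$ is a left PIM of $\overline{A}$. Dually, $P^{*}$ is a left PIM of the symmetric algebra $A$, and the same idempotent-lifting argument (applied to a primitive idempotent $e'$ with $P^{*} = Ae'$) shows that $P^{*}/JP^{*} \cong \overline{A}\,\overline{e'}$ is a left PIM of $\overline{A}$.

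It remains to compare heads. The hypothesis that ${\rm Hd}(P)$ lies in $F[G]/F[G]J$ is exactly ${\rm Hd}(P)\cdot J = 0$; combined with $J \subseteq J(A)$, this yields ${\rm Hd}_{\overline{A}}(P/PJ) = {\rm Hd}_{A}(P)$, and duality in the symmetric algebra $\overline{A}$ then gives ${\rm Hd}\bigl((P/PJ)^{*}\bigr) = {\rm Hd}(P)^{*}$. Applying the same principle to $P^{*}$, viewed first as a left PIM of the symmetric algebra $A$ with head ${\rm Hd}(P)^{*}$ and then reduced modulo $J$, yields ${\rm Hd}(P^{*}/JP^{*}) = {\rm Hd}(P)^{*}$. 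Two left PIMs of $\overline{A}$ with isomorphic heads are isomorphic, so the lemma follows. The main bookkeeping difficulty I foresee is justifying the duality assertion ${\rm Hd}(P^{*}) = {\rm Hd}(P)^{*}$: it rests on the Nakayama permutation of a symmetric algebra being trivial, applied both to $A$ and to $\overline{A}$, where Remark 2.3 is used in the latter case.
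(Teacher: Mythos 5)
Your proof is correct and takes essentially the same approach as the paper's: identify both $(P/PJ)^{*}$ and $P^{*}/JP^{*}$ as left principal indecomposable modules of the symmetric algebra $F[G]/F[G]J$, then show they have the same simple head using the duality ${\rm Hd}(M^{*})=({\rm Soc}(M))^{*}$ together with the fact that ${\rm Soc}\cong{\rm Hd}$ for PIMs of a symmetric algebra. The only difference is that the paper simply asserts the PIM property, whereas you verify it explicitly via idempotent lifting and Remark~2.3.
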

\begin{proof} We should notice both $(P/PJ)^{*}$ and
$P^{*}/JP^{*}$ are left principal indecomposable modules of
$F[G]/F[G]J$. If we prove they have the same head, they are equal.

First we have $({\rm Hd}(M))^{*}={\rm Soc}(M^{*}), {\rm
Hd}(M^{*})=({\rm Soc}(M))^{*}$ for any $G$-module $M$ by duality.
Hence ${\rm Hd}((P/PJ)^{*})=({\rm Soc}(P/PJ))^{*}=({\rm
Hd}(P/PJ))^{*}=({\rm Hd}(P))^{*}$. On the other hand, we have ${\rm
Hd}(P^{*}/JP^{*})={\rm Hd}(P^{*})=({\rm Soc}(P))^{*}=({\rm
Hd}(P))^{*}$. So they have the same head.
\end{proof}

\begin{theorem}With the same notations as above, let $J=J(F[N])$ and
$I=r(J(F[N]))$. We use $\overline{C}=(\overline{c_{ij}})$ and
$C_{J}=(x_{ij})$
 to denote the Cartan matrix of $F[G/N]$ and $F[G]/I$, respectively. Then
 \begin{enumerate}
 \item If $P_{i}, P_{j} \in S_{1}$, then
 $c_{ij} = \overline{c_{ij}} + x_{ij}$
 \item If $P_{j}\in S_{1}$ and $P_{i}\in S_{2}$, then
 $c_{ij}=x_{ij}$
 \item If $P_{i}, P_{j} \in S_{2}$, then $c_{ij}=a_{ij}+x_{ij}$,
 where $a_{ij}$ is the Cartan number of Algebra $F[G]/F[G]J$.

\end{enumerate}
\end{theorem}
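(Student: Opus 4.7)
The proof will hinge on a universal identity $c_{ij}=a_{ij}+x_{ij}$, where $a_{ij}$ denotes the Cartan number of $F[G]/F[G]J$; each of the three cases then follows by identifying $a_{ij}$ in the particular situation. To establish this identity, fix primitive idempotents $e_i,e_j$ with $P_i=e_iF[G]$ and $P_j=e_jF[G]$, so $c_{ij}=\dim e_iF[G]e_j$ and analogously for $a_{ij},x_{ij}$. Apply $\mathrm{Hom}_{F[G]}(P_j,-)$ to the short exact sequence $0\to P_iJ\to P_i\to P_i/P_iJ\to 0$; exactness (from the projectivity of $P_j$) yields $c_{ij}=\dim\mathrm{Hom}(P_j,P_iJ)+a_{ij}$, where the second term is $a_{ij}$ because $P_i/P_iJ$ is a PIM of the symmetric algebra $F[G]/F[G]J$ (Remark 2.3). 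For the first term I use Frobenius duality: by Lemma~2.1 the ideal $I=r(F[G]J)$ is the $\tau$-perpendicular of $F[G]J$ in $F[G]$ under the canonical nondegenerate trace form. Restricting $\tau$ to the nondegenerate pairing $e_iF[G]e_j\times e_jF[G]e_i\to F$ gives $\dim e_iF[G]Je_j+\dim e_jIe_i=c_{ij}$; swapping $i,j$ here and invoking the symmetry of the Cartan matrix of $F[G]/F[G]J$ (Remark~2.3) yields $\dim e_iIe_j=a_{ij}$, so $\dim\mathrm{Hom}(P_j,P_iJ)=\dim e_iF[G]Je_j=c_{ij}-a_{ij}=x_{ij}$.

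With the universal identity in hand, Case (3) is immediate. For Case (1), Proposition~5.1(1) gives $P_iJ=P_iA_N$ for $P_i\in S_1$, so $e_iF[G]Je_j=e_iF[G]A_Ne_j$; hence $a_{ij}=\overline{c_{ij}}$ (the Cartan number of $F[G/N]=F[G]/F[G]A_N$), and the universal identity becomes $c_{ij}=\overline{c_{ij}}+x_{ij}$. For Case (2), I show $a_{ij}=0$: given any $\phi\colon P_j\to P_i/P_iJ$, $F[G]$-linearity together with Proposition~5.1(1) for $P_j\in S_1$ gives $\phi(P_j)A_N=\phi(P_jA_N)=\phi(P_jJ)=\phi(P_j)J\subseteq (P_i/P_iJ)J=0$, so the image of $\phi$ lies in $T:=\{m\in P_i/P_iJ\mid mA_N=0\}$. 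Since $P_i/P_iJ$ is a PIM of the symmetric algebra $F[G]/F[G]J$, its socle is isomorphic to its head $E_i=\mathrm{Hd}(P_i)$; but $P_i\in S_2$ forces $E_iA_N=E_i$, so any nonzero submodule of $T$ would have socle contained in $E_i$ and simultaneously killed by $A_N$ — impossible. Hence $T=0$, $\phi=0$, $a_{ij}=0$, and the universal identity yields $c_{ij}=x_{ij}$.

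The main obstacle lies in the Frobenius-duality step underpinning the universal identity — specifically, identifying $(F[G]J)^{\perp}=I$ via Lemma~2.1, verifying the nondegeneracy of $\tau$ on the idempotent pieces $e_iF[G]e_j\times e_jF[G]e_i$, and invoking the symmetry of the Cartan matrix of $F[G]/F[G]J$ to obtain the clean form $\dim e_iIe_j=a_{ij}$; the remaining arguments then follow routinely from Proposition~5.1 and the symmetric-algebra structure of $F[G]/F[G]J$.
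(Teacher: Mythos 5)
Your proposal is correct, and it reaches the theorem by a genuinely different mechanism than the paper, although both share the same skeleton: applying the exact functor $\mathrm{Hom}(P_j,-)$ to $0\to P_iJ\to P_i\to P_i/P_iJ\to 0$ to split $c_{ij}$ into a Cartan number of the quotient algebra plus the term $\dim\mathrm{Hom}(P_j,P_iJ)$. The paper identifies that term with a Cartan number of $F[G]/I$ by dualizing modules: via Lemma~2.3 it gets $(P_jJ)^{*}\cong P_j^{*}/S(P_j^{*})=P_j^{*}/(P_j^{*}\cap I)$, a left PIM of $F[G]/I$, counts $\mathrm{Hd}(P_i^{*})=\mathrm{Soc}(P_i^{*})$ as a composition factor there to get a starred, transposed entry $x_{ji}^{*}$, and then removes stars and transposes by comparing with the mirror left-module computation and using symmetry of $C$ and $\overline{C}$; the three cases are handled separately, with Case~2 done by the image argument $(P_i,P_j)^{G}=(P_iA_N,P_j)^{G}=(P_i,P_jJ)^{G}$. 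You instead argue ring-theoretically with the symmetrizing form: $I=r(F[G]J)=(F[G]J)^{\perp}$, the nondegenerate pairing $e_iAe_j\times e_jAe_i$ gives $\dim e_iF[G]Je_j+\dim e_jIe_i=c_{ij}$, and the symmetry of the Cartan matrix of the symmetric algebra $F[G]/F[G]J$ (that is Remark~2.2, not 2.3) turns this into $\dim e_iIe_j=a_{ij}$, hence the uniform identity $c_{ij}=a_{ij}+x_{ij}$, after which the three cases reduce to evaluating $a_{ij}$ (equal to $\overline{c_{ij}}$ by Proposition~5.1(1), equal to $0$ by your socle argument, or left as is). The two identifications are equivalent faces of the symmetric-algebra structure, but yours buys a cleaner uniform statement, entrywise $C=C_{F[G]/F[G]J}+C_{F[G]/I}$, and avoids the paper's left/right bookkeeping with $x^{*}$ versus $x$; the paper's dualization, on the other hand, explicitly exhibits the $F[G]/I$-PIMs $P^{*}/(P^{*}\cap I)$, which fits the machinery set up in Section~2. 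Your Case~2 vanishing argument (the simple socle of $P_i/P_iJ$ is isomorphic to $\mathrm{Hd}(P_i)$, which is not annihilated by $A_N$ since $P_i\in S_2$) is likewise different from, but equivalent to, the paper's. One point worth stating explicitly, since the theorem tacitly sets $x_{ij}=0$ when $P_i$ or $P_j$ lies inside $I$: your pairing argument still covers this degenerate case, because in a symmetric algebra the left and right annihilators of the two-sided ideal $F[G]J$ coincide, so $e_i\in I$ forces $e_iF[G]Je_j=0$ and the identity $c_{ij}=a_{ij}+x_{ij}$ holds with $x_{ij}=0$.
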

\begin{proof}
\begin{description}

\item[Case One] For projective module $P_{j}$,  we have
\begin{equation}
0\longrightarrow P_{j}J \longrightarrow P_{j} \longrightarrow
 P_{j}/P_{j}J\longrightarrow 0,
 \end{equation}
 so we have $$
0\longrightarrow  (P_{i}, P_{j}J)^{G}\longrightarrow (P_{i},
P_{j})^{G} \longrightarrow (P_{i}, P_{j}/ P_{j}J)^{G}\longrightarrow
0$$ as $P_{i}$ is $G$-projective module. Thus $$ {\rm
Dim}_{F}(P_{i}, P_{j})^{G} = {\rm Dim}_{F}(P_{i}, P_{j}J)^{G} +
 {\rm Dim}_{F}(P_{i}, P_{j}/ P_{j}J)^{G}.$$
Since $\overline{P_{i}}\cong P_{i}/P_{i}J$, so we got
 $$c_{ij}={\rm Dim}_{F}(P_{i}, P_{j}J)^{G} + \overline{c_{ij}}.$$

 Now for ${\rm Dim}_{F}(P_{i}, P_{j}J)^{G}$, first we know by \cite{some}
 \begin{equation}
 {\rm Dim}_{F}(P_{i}, P_{j}J)^{G}  = {\rm Dim}_{F}((P_{j}J)^{*}, P_{i}^{*})^{G}.
  \end{equation}
 On the other hand, we have the
following from (4.1)
\begin{equation} 0 \longrightarrow
(P_{j}/P_{j}J)^{*} \longrightarrow P_{j}^{*} \longrightarrow
(P_{j}J)^{*} \longrightarrow 0.
 \end{equation}
 By
Lemma 2.3, we have $S(P_{j}^{*}) = ( P_{j}/P_{j}J)^{*}$, then by
(4.3) we get
$$(P_{j}J)^{*} \cong P_{j}^{*}/S(P_{j}^{*}).$$
 Thus $$\begin{array}{lll}{\rm dim}_{F}((P_{j}J)^{*}, P_{i}^{*})^{G}&=&
 {\rm dim}_{F}(P_{j}^{*}/S(P_{j}^{*}), P_{i}^{*})^{G}\\
& =&{\rm Multiplicity\; of\; Soc}(P_{i}^{*})\;{\rm as\; the\; composition\; factor\; in\; }\\&  &P_{j}^{*}/S(P_{j}^{*})\\
 & =&{\rm Multiplicity \;of \;Hd}(P_{i}^{*}){\rm \;as \;the \;composition \;factor \;in \;}\\&   &P_{j}^{*}/S(P_{j}^{*})\\
 &=& x_{ji}^{*}\end{array}$$
 Notice $P_{j}^{*}\cap
I=S(P_{j}^{*})$, then by (4.2), we have
\begin{equation} c_{ij} =
\overline{c_{ij}} + x_{ji}^{*},
\end{equation} where
$(x_{ji}^{*})$ is the left Cartan matrix of $F[G]/r(J)$, decided by
$$\{P_{1}^{*}/S(P_{1}^{*}), P_{2}^{*}/S(P_{2}^{*}), \cdots,
P_{n}^{*}/S(P_{n}^{*})\}.$$

Similarly we can get
$$\begin{array}{lll}c_{ij}^{*}& = &{\rm
Dim}_{F}(P_{i}^{*}, JP_{j}^{*})^{G}+{\rm Dim}_{F}(P_{i}^{*},
P_{j}^{*}/JP_{j}^{*})^{G}\\ & = &{\rm Dim}_{F}((JP_{j}^{*})^{*},
P_{i})^{G}+{\rm Dim}_{F}(P_{i}^{*}/JP_{i}^{*},
P_{j}^{*}/JP_{j}^{*})^{G}\\ & = & {\rm Dim}_{F}((JP_{j}^{*})^{*},
P_{i})^{G}+\overline{c_{ij}}^{*}
\end{array}$$
by  Lemma 4.2. Since $(JP^{*})^{*} =P/I\cap P$ by the same arguments
as above, $$\begin{array}{lll}{\rm Dim}_{F}((JP_{j}^{*})^{*},
P_{i})^{G} &= &{\rm Dim}_{F}(P_{j}/I\cap P_{j}, P_{i})^{G}\\&= &
{\rm the \;number\; of \;Soc} (P_{i})\\ &  &{\rm as\; the
\;composition \;factors
\;of} P_{j}/I\cap P_{j}\\& =& {\rm the \;number \;of \; Hd }(P_{i})\\
& &{\rm as \;the \;composition\; factors\; of} P_{j}/I\cap P_{j}\\&
= & {\rm the \;number \;of  \;Hd }(P_{i}/I\cap P_{i})\\ &
&{\rm as \;the \;composition \;factors\; of } P_{j}/I\cap P_{j} \\
&= &x_{ji}.
\end{array}$$ Therefore we have
\begin{equation}c_{ij}^{*}= \overline{c_{ij}}^{*} + x_{ji}.
\end{equation}
 Since the Cartan matrices of $F[G]$ and
$F[G/N]$ are dual and symmetric by \cite{some}, so  $c_{ij} =
\overline{c_{ij}} + x_{ij}$ by equality (4.4) and (4.5).

  \item[Case two] Let $P_{i} \in S_{2}, P_{j}\in S_{1}$. Then
  $$\begin{array}{lll}(P_{i}, P_{j})^{G}& = & (P_{i}A_{N},
  P_{j})^{G}\\ &= &(P_{i}, P_{j}J)^{G} \\
\end{array}$$
By the same arguments as in Case one, we have $c_{ij}=x_{ji}^{*}$.
Thus $c_{ij}=x_{ij}$ as $c_{ij}$ is dual and symmetric.

\item[Case three] Let $P_{i}, P_{j} \in S_{2}$. Then by the same
reason as in Case one, we have $$ {\rm Dim}_{F}(P_{i}, P_{j})^{G} =
{\rm Dim}_{F}(P_{i}, P_{j}J)^{G} +
 {\rm Dim}_{F}(P_{i}, P_{j}/ P_{j}J)^{G}.$$

 Let $a_{ij}={\rm Dim}_{F}(P_{i}, P_{j}/ P_{j}J)^{G}={\rm Dim}_{F}(P_{i}/P_{i}J, P_{j}/ P_{j}J)^{G}$, which is
 Cartan number from the symmetric algebra $F[G]/F[G]J$, so $a_{ij}$ is dual and
 symmetric. The same arguments as before induces
 $c_{ij}=a_{ij}+x_{ij}$.

  Then the assertion follows.
  \end{description}
   \end{proof}

  Consider two extreme examples for Theorem 4.3. If $N$ is a
  $p'$-prime group, then all $x_{ij}=0.$ The Cartan numbers from $S_{2}$ tell nothing new by Theorem 4.3.
  If $N$ is a $p$-group,
  then set $S_{2}$ is an empty set. Only Case one could happen.

  We apply the results above to consider the Cartan matrix of a
  $p-$block algebra of $G$. If $B$ is a $p$-block algebra of $G$
  covering a $p$-block algebra of $N$ of defect zero, then $B$ has
  no simple modules in $F[G]/I$ by Theorem 3.10. Hence all Cartan numbers $x_{ij}$
  of $F[G]/I$
  related to the principal indecomposable modules in $B$ will be
  zero. Furthermore, we know the Cartan matrix of a block algebra
  can not split into two unconnected parts, which shows that all principal indecomposable modules of $B$
  are in $S_{1}$ or all in $S_{2}$. So we have the
  following
  \begin{corollary} Let $N$ be a normal subgroup of group $G$. Then $B$ is a $p$-block algebra of  $G$
  covering a zero-defect $p$-block of $N$ if and only if the Cartan matrix of
  $B$ is equal to that of image of $B$ in $F[G/N]$ or that of image
  of $B$ in $F[G]/F[G]J$, where $J=J(F[N])$.
  \end{corollary}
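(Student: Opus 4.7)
The plan is to deduce both implications from Theorem 4.3 (the three-case decomposition $c_{ij} = \overline{c_{ij}} + x_{ij}$, $c_{ij} = x_{ij}$, $c_{ij} = a_{ij} + x_{ij}$ according as $P_i, P_j$ belong to $S_1$ or $S_2$), combined with the characterization (via Proposition 3.5 and Theorem 4.1 together with its remark) that $B \leq I$ if and only if $B$ covers a $p$-block of $N$ of defect zero.

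For the forward direction, I would start by assuming $B$ covers a zero-defect block, so $B \leq I$ and hence $P_i/(P_i \cap I) = 0$ for every principal indecomposable module $P_i$ in $B$. This forces $x_{ij} = 0$ whenever $P_i, P_j \in B$. Substituting into the three cases of Theorem 4.3 gives $c_{ij} = \overline{c_{ij}}$ on $B \cap S_1$, $c_{ij} = a_{ij}$ on $B \cap S_2$, and $c_{ij} = 0$ on mixed pairs. Since the Cartan matrix of a block algebra is indecomposable (its Brauer graph is connected), a nonempty mixed configuration would split it, so one of $B \cap S_1, B \cap S_2$ must be empty. If $B \subseteq S_1$ the equalities $c_{ij} = \overline{c_{ij}}$ identify the Cartan matrix of $B$ with the Cartan matrix of the image of $B$ in $F[G/N]$; if $B \subseteq S_2$ the equalities $c_{ij} = a_{ij}$ identify it with the Cartan matrix of the image of $B$ in $F[G]/F[G]J$.

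For the converse, suppose first that the Cartan matrix of $B$ equals that of its image in $F[G/N]$. A principal indecomposable $P_i$ in $B$ has nonzero image in $F[G/N]$ only when $P_i \in S_1$, so equality of matrix sizes forces $B \subseteq S_1$; Case one of Theorem 4.3 then reads $c_{ij} = \overline{c_{ij}} + x_{ij}$, and the assumed equality forces $x_{ii} = 0$. Since $x_{ii}$ is the multiplicity of the head of $P_i/(P_i\cap I)$ as a composition factor of itself, and is at least $1$ whenever $P_i \not\leq I$, this compels $P_i \leq I$ for every $P_i \in B$, so $B \leq I$ and the characterization recalled above finishes the argument. If instead the Cartan matrix of $B$ equals that of its image in $F[G]/F[G]J$, then for $P_i \in B \cap S_1$ Case one combined with $a_{ii} = \overline{c_{ii}}$ (which holds on $S_1$ because $P_iJ = P_iA_N$ there) yields $x_{ii} = 0$, and for $P_i \in B \cap S_2$ Case three yields $x_{ii} = 0$ immediately; hence $B \leq I$ in this case as well.

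The only delicate step is the indecomposability-of-Cartan-matrix argument in the forward direction, which the author flagged in the paragraph preceding the statement. Everything else is direct bookkeeping with the three-case formulas of Theorem 4.3 and the already-established equivalence between $B \leq I$ and $B$ covering a zero-defect block of $N$.
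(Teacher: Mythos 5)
Your argument is correct and follows essentially the same route as the paper: the forward direction uses $B\leq I$ (hence all $x_{ij}=0$) together with the connectedness of a block's Cartan matrix to place all principal indecomposables of $B$ in $S_1$ or all in $S_2$, and the converse extracts $x_{ii}=0$ from Theorem 4.3 to force $P_i\leq I$, hence $B\leq I$ and the covering of a zero-defect block. Your converse merely spells out details (matching of indices, $a_{ii}=\overline{c_{ii}}$ on $S_1$, $x_{ii}\geq 1$ when $P_i\not\leq I$) that the paper leaves implicit, so there is no substantive difference.
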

  \begin{proof} The assumption must holds by the arguments above if $B$ covers a zero-defect $p$-block of $N$.
  Conversely, if the Cartan matrix of $B$ is equal to that of
  image of $B$ in $F[G/N]$ or that of image
  of $B$ in $F[G]/F[G]J$, we can find some principal
  indecomposable module $P_{i}$ in $B$ such that Cartan number $x_{ii}=0$
  of $F[G]/I$ corresponding to $P_{i}$. Thus $P_{i}\leq I$ and $B$
  covers a zero-defect $p$-block of $N$ from Theorem 3.10.
  \end{proof}

\begin{remark}
 If $N$ is $p'$-group and the image of $B$ in $F[G/N]$ is not
zero, it is well known that $B$ and its image have the same
irreducible characters. We generalize these results.
\end{remark}
Notation: In the following, when we say a Cartan number $c$ is
corresponding to a simple module $E$, it means that $c$ is the
multiplicity of $E$ as a composition factors in the projective cover
of $E$.

\begin{corollary}Let $N$ be a normal subgroup of $G$ and $G/N$ is
a $p$-group. Number $c_{11}$ denote the Cartan number corresponding
to the trivial $G$-module $F$. Then $c_{11}\geq|G/N|$ and the
equality holds if and only if $G$ is $p$-nilpotent and
$N=O_{p'}(G)$.
\end{corollary}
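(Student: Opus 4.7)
The plan is to apply Case~(1) of Theorem~5.3 to the projective cover $P_{1}$ of the trivial $F[G]$-module $F$. Since $F$ factors through the quotient $G/N$, it is a simple $F[G/N]$-module, so $P_{1}\in S_{1}$, and the decomposition in Theorem~5.3 yields
\[
c_{11}=\overline{c_{11}}+x_{11}.
\]
This single identity is really the only structural input needed.

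Next I would evaluate $\overline{c_{11}}$ directly. Because $G/N$ is a $p$-group and $F$ has characteristic $p$, the algebra $F[G/N]$ is local with unique simple module $F$, whose $F[G/N]$-projective cover is the regular module $F[G/N]$ itself; this regular module has $|G/N|$ composition factors, all copies of $F$, so $\overline{c_{11}}=|G/N|$. As the Cartan number $x_{11}$ of $F[G]/I$ is a non-negative integer, the bound $c_{11}\ge |G/N|$ is immediate.

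For the equality case I would extract from the proof of Theorem~5.3 that $x_{11}$ equals the multiplicity of $\mathrm{Hd}(P_{1}/P_{1}\cap I)$ as a composition factor of $P_{1}/P_{1}\cap I$. By Corollary~3.3, if $P_{1}\not\leq I$ then this head survives and equals $F$, so $x_{11}\ge 1$; conversely, if $P_{1}\leq I$ then $P_{1}/P_{1}\cap I=0$ and $x_{11}=0$. Hence $c_{11}=|G/N|$ is equivalent to $P_{1}\leq I$. Applying Proposition~3.5 with the simple $N$-module lying under $F$ (which is just the trivial $N$-module), $P_{1}\leq I$ is in turn equivalent to the trivial $N$-module being projective, i.e.\ to $p\nmid|N|$, that is, $N$ being a $p'$-group.

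Finally I would carry out the group-theoretic translation under the standing hypothesis that $G/N$ is a $p$-group. If $N$ is a $p'$-group, then $N\leq O_{p'}(G)$, while $O_{p'}(G)/N$ is simultaneously a $p'$-group and a subgroup of the $p$-group $G/N$, hence trivial; therefore $N=O_{p'}(G)$ and $G/O_{p'}(G)=G/N$ is a $p$-group, so $G$ is $p$-nilpotent. The converse is immediate. The main obstacle is essentially bookkeeping rather than any deep computation: one must correctly read off from Corollary~3.3 what $x_{11}=0$ means, and then translate the algebraic condition $p\nmid|N|$ into the conjunction \emph{$G$ is $p$-nilpotent with $N=O_{p'}(G)$} using the hypothesis on $G/N$; once these translations are done, Theorem~5.3 supplies the rest of the argument.
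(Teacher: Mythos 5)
Your proposal is correct and rests on the same core argument as the paper: since $G/N$ is a $p$-group, $S_{1}$ consists only of the projective cover $P_{1}$ of the trivial module, Theorem~5.3(1) gives $c_{11}=\overline{c_{11}}+x_{11}$, and $\overline{c_{11}}=|G/N|$ because $F[G/N]$ is local, whence $c_{11}\geq |G/N|$. The only divergence is in the equality case: the paper disposes of it in one line by saying equality means the principal block covers a defect-zero block of $N$ and then invoking the characterization via the principal block having a single irreducible Brauer character, whereas you argue at the module level that $x_{11}=0$ iff $P_{1}\leq I$, use Proposition~3.5 with the trivial $N$-module to get that this happens iff $p\nmid |N|$, and then do the elementary translation ($N$ a $p'$-group and $G/N$ a $p$-group iff $G$ is $p$-nilpotent with $N=O_{p'}(G)$). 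Your route is more self-contained and actually supplies the step $N=O_{p'}(G)$ that the paper's terse proof leaves implicit, at the cost of not using the block-level Corollary~5.4; both are sound.
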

\begin{proof} Since $G/N$ is a $p$-group, the set $S_{1}$ contains
only one element $P$ which is the projective cover of trivial
$G$-module $F$. The Cartan number $\overline{c_{11}}$ in $G/N$
corresponding to the trivial $G/N$-module is $|G/N|$, so $c_{11}\geq
|G/N|$. The equality holds if and only if the principal $p$-block
covers a defect-zero block in $N$, but this is equal to the
principal $p$-block having only one irreducible Brauer character.
\end{proof}

In the following we discuss the relations between the composition
factors and Cartan numbers for a group $G$. We will generalize a
result in \cite{tsu} about a lower bound for the first Cartan
invariant $c_{11}$ in terms of the chief composition factors of $G$.

Let the group $G$ has a series of normal subgroups as following
$$1=G_{0} \unlhd G_{1}\unlhd \cdots
\unlhd G_{n}\unlhd G_{n+1}=G.$$ Let  $k$ be the number of factors
which are $p$-groups(non-trivial) in $G_{i}/G_{i-1}, i=1,2,...,n+1$.

\begin{theorem} With the same notation as above, if $E_{j}$ is a
simple module of $G/G_{n}$, then the Cartan number $c_{jj}\geq k+1$,
where $c_{jj}$ is the Cartan number of $G$ corresponding to $E_{j}$.
In particular, if $G/G_{n}$ is a $p$-group, then $c_{11}\geq
|G/G_{n}|+k-1$.
\end{theorem}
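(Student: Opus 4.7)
The plan is to proceed by induction on $n$, peeling off $G_1$ at each step by invoking Theorem~5.3 with $N=G_1$. Since $E_j$ is a simple $G/G_n$-module and $G_1\leq G_n$, the module $E_j$ is inflated from $G/G_1$, so its projective cover $P_j$ lies in the set $S_1$ (relative to $G_1$). Case~(1) of Theorem~5.3 therefore yields
\[ c_{jj} \;=\; \overline{c_{jj}} \,+\, x_{jj}, \]
where $\overline{c_{jj}}$ is the Cartan invariant of $E_j$ in $F[G/G_1]$ and $x_{jj}$ is the corresponding invariant in $F[G]/I$ with $I=r(J(F[G_1]))$.

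The induced series $1=G_1/G_1\unlhd G_2/G_1\unlhd\cdots\unlhd G/G_1$ has one fewer factor and contains $k'=k-\epsilon$ non-trivial $p$-group factors, where $\epsilon=1$ if $G_1$ itself is a non-trivial $p$-group and $\epsilon=0$ otherwise. If $G_1$ is not a non-trivial $p$-group, then $k'=k$, so the induction hypothesis applied to $G/G_1$ gives $\overline{c_{jj}}\geq k+1$ and hence $c_{jj}\geq \overline{c_{jj}}\geq k+1$. If instead $G_1$ is a non-trivial $p$-group, induction only provides $\overline{c_{jj}}\geq k$, and the missing $+1$ must come from $x_{jj}$.

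The main obstacle is therefore to verify $x_{jj}\geq 1$ in this latter case. Here one uses that a non-trivial $p$-group $G_1$ admits only the trivial simple module, and this trivial module is not projective, since its projective cover $F[G_1]$ has dimension $|G_1|>1$. Hence no simple $G_1$-module is projective, and Proposition~3.5 forces $P_j\not\leq I$; consequently $E_j$ remains a simple $F[G]/I$-module, so $x_{jj}\geq 1$, completing the inductive step. The ``in particular'' statement propagates through the same induction: because $G_1$ is removed from the bottom of the series, the top factor $G/G_n$ (and hence $|G/G_n|$) is preserved throughout, and the base case is the equality $\overline{c_{11}}=|G/G_n|$ for the trivial module of a $p$-group (essentially Corollary~5.5), which supplies the required $|G/G_n|$ contribution.
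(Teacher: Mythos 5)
Your proposal is correct and follows essentially the same route as the paper: the paper also iterates the decomposition $c_{jj}=\overline{c_{jj}}+x_{jj}$ of Theorem 5.3 along the chain (your induction on $n$ is just this iteration), notes that $x_{jj}\neq 0$ whenever the factor being removed is a non-trivial $p$-group, and finishes the ``in particular'' case with the Cartan number $|G/G_{n}|$ of the $p$-group at the top. The only difference is that you spell out, via Proposition 3.5 and the fact that a non-trivial $p$-group has no projective simple module, why $x_{jj}\geq 1$ in the $p$-group steps — a point the paper asserts without justification.
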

\begin{proof} First, we should know $E_{j}$ is also a simple
$G/G_{i}$-module for $i=0,1,..., n-1$, as $G_{i}$ acts trivially on
$E_{j}$.

When $i=1$, we have $c_{jj}=c_{jj}^{(1)}+x_{jj}^{(1)}$ by Theorem
4.3, where $c_{jj}^{(1)}$ is the Cartan number of $F[G/G_{1}]$
corresponding to $E_{j}$ and $x_{jj}^{(1)}$ is the Cartan number of
$F[G]/I$ corresponding to $E_{j}, I=\{\alpha \in
F[G]|J(F[G_{1}])\alpha =0\}$.

Considering $c_{jj}^{(1)}$ in $F[G/G_{1}]$ and $G_{2}/G_{1}$ as
normal subgroup of $G/G_{1}$, we have
$c_{jj}^{(1)}=c_{jj}^{(2)}+x_{jj}^{(2)}$, where $c_{jj}^{(2)}$ is
the Cartan number of $F[G/G_{2}]$ corresponding to $E_{j}$ and
$x_{jj}^{(2)}$ is the Cartan number of $F[G/G_{1}]/I_{1}, I_{1}={\rm
Ann}_{r}J(F[G_{2}/G_{1}])$.

We continue in the same way as above, then we end in $n$th step:
$$c_{jj}=c_{jj}^{(n)}+x_{jj}^{(n)}+\cdots+x_{jj}^{(2)}+x_{jj}^{(1)},$$
where $c_{jj}^{(n)}$ is the Cartan number of $G/G_{n}$ corresponding
to $E_{j}$. Notice $x_{jj}^{(t)}\neq 0$ if $G_{t}/G_{t-1}$ is a
$p$-group. If $G/G_{n}$ is a $p$-group, $c_{jj}^{(n)}\geq |G/G_{n}|$
by Corollary 4.6. This completes our proof.
\end{proof}

There is a block version to state Theorem 5.7. With the same series
of  normal subgroups of $G$ as above, for a simple module $E$ in
$G/G_{n}$, we suppose $B^{(i)}$ is the  $p$-block containing $E$ in
$G/G_{i}$. Assume $p$-block $B$ of $G$ contains the simple module
$E$, then there are series of natural epimorphisms as follows:
$$B\rightarrow^{f_{1}}B^{(1)}\rightarrow^{f_{2}}B^{(2)}\rightarrow\cdots\rightarrow^{f_{n}}B^{(n)}$$
where $f_{i}$ is the composition of natural map from $G/G_{i-1}$ to
$G/G_{i}$ and the projective to $B^{(i)}$ from the image of
$B^{(i-1)}$ in $G/G_{i}$. By Corollary 4.4, we know $B^{(i)}$ and
$B^{(i+1)}$(Assume $B^{(0)}=B,i=0,1,...,n-1$) have the same Cartan
matrix if and only if $B^{(i)}$ covers a zero-defect $p$-block in
$F[G_{i+1}/G_{i}]$. Let $k$ be the number of $p$-blocks $B^{(i)}$
which covers non-zero-defect $p$-blocks, $i=0,1,...,n-1$, then we
have
\begin{theorem}Under the same notation as above, let $c$ be the Cartan
number corresponding to simple module $E$ in $G$. If $E$ belongs to
a $p$-block $B^{(n)}$, then $c\geq k+1$.
\end{theorem}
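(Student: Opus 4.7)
The plan is to iterate Theorem 5.3 along the chain $1=G_0\unlhd G_1\unlhd\cdots\unlhd G_n\unlhd G$, exactly as in the proof of Theorem 5.7, but to sharpen the accounting by using the block condition to determine at which steps the Cartan number actually jumps. First I observe that since $E$ is simple as an $F[G/G_n]$-module, it is automatically simple as an $F[G/G_i]$-module for every $0\le i\le n$, because $G_i\le G_n$ acts trivially on $E$. Consequently, when Theorem 5.3 is applied with ambient group $G/G_i$ and normal subgroup $G_{i+1}/G_i$, the projective cover $P_j^{(i)}$ of $E$ in $F[G/G_i]$ always lies in the corresponding subset $S_1$, so Case one of Theorem 5.3 yields the clean recursion
\begin{equation*}
c_{jj}^{(i)} \;=\; c_{jj}^{(i+1)} \;+\; x_{jj}^{(i+1)},
\end{equation*}
where $c_{jj}^{(i)}$ is the Cartan number attached to $E$ in $F[G/G_i]$ and $x_{jj}^{(i+1)}$ is the Cartan number attached to $E$ in the quotient $F[G/G_i]/I_i$ with $I_i=\mathrm{Ann}_r(J(F[G_{i+1}/G_i]))$.

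Next I iterate this identity from $i=0$ up to $i=n-1$, arriving at
\begin{equation*}
c \;=\; c_{jj}^{(0)} \;=\; c_{jj}^{(n)} \;+\; \sum_{i=0}^{n-1} x_{jj}^{(i+1)}.
\end{equation*}
The central claim is that $x_{jj}^{(i+1)}\ge 1$ whenever $B^{(i)}$ covers a non-zero-defect $p$-block of $F[G_{i+1}/G_i]$. To prove this, I combine Theorem 4.1 and Corollary 3.3 applied inside $F[G/G_i]$: if $B^{(i)}$ does not cover a zero-defect block of $F[G_{i+1}/G_i]$, then no principal indecomposable module of $B^{(i)}$ is contained in $I_i$, so $P_j^{(i)}\cap I_i\ne P_j^{(i)}$; then Corollary 3.3 implies that $P_j^{(i)}/P_j^{(i)}\cap I_i$ is a non-zero principal indecomposable $F[G/G_i]/I_i$-module whose head is still $E$. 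In particular the multiplicity of $E$ in its own projective $F[G/G_i]/I_i$-cover, namely $x_{jj}^{(i+1)}$, is at least $1$.

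Summing over the $k$ indices for which $B^{(i)}$ covers a non-zero-defect block, and using that all the remaining $x_{jj}^{(i+1)}$ are non-negative, I obtain $\sum_{i=0}^{n-1}x_{jj}^{(i+1)}\ge k$. Finally $c_{jj}^{(n)}\ge 1$ because $E$ appears at least once as a composition factor of its own projective $F[G/G_n]$-cover, so $c\ge k+1$ as claimed.

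The main obstacle, as in Theorem 5.7, is not the recursion itself but ensuring that the specific diagonal entry $x_{jj}^{(i+1)}$ attached to $E$ is positive at every step where the block covers a non-zero-defect block; a priori Corollary 4.4 only guarantees that \emph{some} diagonal entry is positive. The refinement needed is the block-wise statement of Theorem 4.1 together with Corollary 3.3, which promotes the block-level assertion to the level of the individual principal indecomposable module corresponding to $E$.
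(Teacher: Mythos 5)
Your proof is correct and follows essentially the paper's own route: you iterate the decomposition of Theorem 5.3 along the normal series exactly as in the proof of Theorem 5.7, and use the block-covering criterion (Proposition 3.5 and Theorem 4.1, together with Corollary 3.3) to see that the diagonal entry $x_{jj}^{(i+1)}$ is at least $1$ at each of the $k$ steps where $B^{(i)}$ covers a non-zero-defect block, while $c_{jj}^{(n)}\geq 1$. You also make explicit a point the paper's one-line proof (``by the arguments above'') leaves implicit, namely that the head of $P_j^{(i)}$ survives in $F[G/G_i]/I_i$, so that the specific diagonal Cartan entry attached to $E$ --- not merely some entry of the block's Cartan matrix --- strictly increases at each such step.
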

\begin{proof} By the arguments above.
\end{proof}

\section{Cartan invariants and dimensions of Brauer characters }

For $N \unlhd G$ and  a $p$-block $b$ in $N$, we let $
\textrm{Bl}(G|b)$ denote all blocks of $G$ covering $b$.  it is an
important to find connections between them. In general case, it is
not easy. If there exists defect zero blocks in $N$, we will have
more advantages to study those blocks in $G$ covering defect zero
blocks in $N$. In this section we consider some $p$-blocks of $G$
which cover a $p$-block $b$ with only one Brauer irreducible
character in $N$ for $N\unlhd G$. We obtain some results about
Cartan invariants and degrees of Brauer characters  of $B$. In
\cite{willems} and \cite{holm}, T. Holms and W. Willems present
several conjectures on Brauer character degrees. Some true cases are
proved by them. Here we show some more general true cases covering
their special cases.¡¡Furthermore in some cases, we give affirmative
answers to a question presented by T. Holms and W. Willems in
\cite{holm}.

If $E_{i}$ denote a simple $G$-module covering a simple $N$-module
$V$, by Clifford Theorem we have as $N$-module:
$$ E_{i}=e_{i}\bigoplus_{t\in G/T_{V}}V^{t},$$
where $e_{i}$ is the ramification coefficient of $E_{i}$, $T_{V}$ is
the initial group of $V$, and $V^{t}$ is a conjugate module of $V$.

Now let $P_{i}$ is the projective cover of $E_{i}$. As $N$-module,
$P_{i}$ or its character form has a similar decomposition. See W.
Feit \cite[Lemma 1.3, Ch. VI]{feit}, H. Nagao and Y.
Tsushima\cite[P389, Problem 10]{nag}, and Navarro\cite[Corollary
8.8, VIII]{nav}. Here we will improve their result with a new proof.
Let $P_{V}$ denote the projective cover of $V\in \textrm{IBr}(N)$,
then $P_{V}^{t}$ is a conjugate module of $P_{V}$ for $t\in G$ and
projective cover of $V^{t}$.

\begin{theorem}With the same notation as above, if
$$ E_{i}=e_{i}\bigoplus_{t\in G/T_{V}}V^{t},$$ then
$$P_{i}=a_{i}\bigoplus_{t\in G/T_{V}}P_{V}^{t},$$ where $a_{i}\geq e_{i}$,
the equality holds if and only if $E_{i}$ is $N$-projective.

\end{theorem}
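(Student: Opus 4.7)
The plan is to use Clifford theory to pull the statement back to the inertia subgroup $T=T_V$, where $E_i$ corresponds to a single simple $F[T]$-module $W$, and then descend to $N$ via Mackey.

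By Clifford's theorem, there is a unique (up to isomorphism) simple $F[T]$-module $W$ lying over $V$ with $E_i\cong W^{\uparrow G}$ and $W_{\downarrow N}=e_iV$. The Clifford correspondence for projective indecomposables (equivalently, the Fong--Reynolds Morita equivalence between the block of $E_i$ in $F[G]$ and the block of $W$ in $F[T]$) identifies the projective cover $P_i$ with $Q^{\uparrow G}$, where $Q$ denotes the projective $F[T]$-cover of $W$. Since $N\unlhd G$ and $N\le T$, every $(N,T)$-double coset in $G$ collapses to a single right coset $gT$, so Mackey's formula gives
\[
(P_i)_{\downarrow N}=(Q^{\uparrow G})_{\downarrow N}=\bigoplus_{t\in G/T}(Q_{\downarrow N})^{t},
\]
reducing the task to proving $Q_{\downarrow N}=a_iP_V$ with $a_i\ge e_i$.

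For the structure of $Q_{\downarrow N}$, I would argue that every simple $F[T]$-composition factor of $Q$ lies in the block of $W$, which (because $T=T_V$ fixes the isomorphism class of $V$) consists solely of simple $F[T]$-modules over $V$; on restriction, each such factor is a direct sum of copies of $V$. Hence every $F[N]$-composition factor of $Q_{\downarrow N}$ is $V$, and since $Q_{\downarrow N}$ is projective (restrictions of projectives are projective), it must be a direct sum $a_iP_V$ for some integer $a_i\ge 1$. The inequality $a_i\ge e_i$ then follows by comparing heads: using $J(F[N])\subseteq J(F[T])$, the projective cover surjection $Q\twoheadrightarrow W$ factors through $Q/J(F[N])Q$, producing a surjection $a_iV=Q_{\downarrow N}/J(F[N])Q_{\downarrow N}\twoheadrightarrow W_{\downarrow N}=e_iV$.

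Equality holds precisely when this surjection is an isomorphism, equivalently $J(F[N])Q=J(F[T])Q$, equivalently $Q/J(F[N])Q$ is already simple as an $F[T]$-module. The main obstacle is matching this internal algebraic condition with the Green-theoretic notion of $E_i$ being $N$-projective. My intended route is via Higman's trace criterion applied to the quotient algebra $F[T]/F[T]J(F[N])$ acting on $Q/J(F[N])Q$, combined with the Green correspondence, which identifies the vertex of $W$ as an $F[T]$-module with the vertex of $E_i=W^{\uparrow G}$ as an $F[G]$-module; together these translate $J(F[N])Q=J(F[T])Q$ into the statement that the vertex of $E_i$ is contained in $N$, i.e., $N$-projectivity of $E_i$ in Green's sense.
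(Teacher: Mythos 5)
Your plan has a genuine gap at the step establishing $Q_{\downarrow N}=a_iP_V$. You assert that the block of $W$ in $F[T_V]$ ``consists solely of simple $F[T]$-modules over $V$,'' and hence that every $F[N]$-composition factor of $Q_{\downarrow N}$ is $V$. Neither claim is true in general. Stabilizing the isomorphism class of $V$ is weaker than what you need: the block of $W$ covers the block $b_V$ of $V$ in $F[N]$, and if $b_V$ contains other simple modules, the block of $W$ in $F[T_V]$ can contain simples lying over those. Concretely, take $p=3$, $N=S_3$, $G=T=S_3\times C_2$, $V=F$ trivial, $W=F\otimes F$. Then $Q=P_F^{S_3}\otimes F$ has a composition factor $\epsilon\otimes F$ (where $\epsilon$ is the sign module of $S_3$), which lies over $\epsilon\neq V$; and $Q_{\downarrow N}=P_F^{S_3}$ has composition factors $F,\epsilon,F$, not only $V$. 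The conclusion $Q_{\downarrow N}=a_iP_V$ nevertheless holds, but for a different reason: by Nakayama (Frobenius) reciprocity $Q\mid P_V^{\uparrow T}$, and Mackey with $T=T_V$ gives $(P_V^{\uparrow T})_{\downarrow N}=|T/N|\,P_V$, so $Q_{\downarrow N}\mid |T/N|\,P_V$. You need that reciprocity argument, not the block argument. A smaller but real issue is that the identification $P_i\cong Q^{\uparrow G}$ through a ``Fong--Reynolds Morita equivalence'' is at the inertia group of the block, which may be strictly larger than $T_V$; what you actually need is the Clifford correspondence for projective covers at $T_V$ itself, which requires its own justification.

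You also leave the equality case as a sketch, acknowledging it as the ``main obstacle,'' so that direction is not actually proved. For comparison, the paper's proof is far more direct and avoids passing to $T_V$ entirely: from $P_V^{\uparrow G}=\bigoplus_i e_iP_i$ it reads off $(P_i)_{\downarrow N}$ as a direct sum of conjugates $P_V^t$; then the exact sequence
\[
0\longrightarrow P_iJ(F[G])/P_iJ(F[N])\longrightarrow P_i/P_iJ(F[N])\longrightarrow P_i/P_iJ(F[G])\longrightarrow 0,
\]
which splits over $N$ because the middle term is $N$-semisimple, yields $a_i\ge e_i$; and equality is read off as $P_iJ(F[G])=P_iJ(F[N])$, which is $N$-projectivity of $E_i$ by a standard lemma of Feit. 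That route is shorter, avoids both problematic steps above, and gives the equality clause in one line rather than via Higman's criterion and Green correspondence.
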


\begin{proof}
By Frobenius-Nakayama-reciprocity theorem, there is $$
P_{V}^{G}=\bigoplus _{i}e_{i}P_{i}.$$so
$(P_{i})_{N}|(P_{V}^{G})_{N}= \bigoplus_{t\in G/N}P_{V}^{t}$. Since
all $P_{V}^{t},t\in G$ are projective indecomposable (as projective
cover of $V^{t}$), so we have $(P_{i})_{N}=\bigoplus_{t}P_{V}^{t}$
and $\textrm{Hd}((P_{i})_{N})=\bigoplus_{t}\textrm{Hd}(P_{V}^{t})$ .

Now as $G$-module, it holds $P_{i}J(F[N])\leq P_{i}J(F[G])$, so we
have exact series:
$$0\longrightarrow P_{i}J(F[G])/P_{i}J(F[N]) \longrightarrow
P_{i}/P_{i}J(F[N])\longrightarrow P_{i}/P_{i}J(F[G])\longrightarrow
0.$$

As $N$-modules, the exact series above is split since
$P_{i}/P_{i}J(F[N])$ is completely reducible. Thus
$$(\textrm{Hd}(P_{i}) )_{N}=(P_{i}/P_{i}J(F[G]))_{N}|(P_{i}/P_{i}J(F[N]))_{N}= \textrm{Hd}((P_{i})_{N}).$$
As $E_{i}=\textrm{Hd}(P_{i})$, It follows that
$$(P_{i})_{N}=\bigoplus_{t\in G/T_{V}}a_{t}P_{V}^{t}, a_{t}\geq e_{i}.$$

Since
$$\begin{array}{lll} a_{t} & =
&\textrm{Dim}_{F}((P_{i})_{N},V^{t})^{N}\\
& = & \textrm{Dim}_{F}(P_{i},(V^{t})^{G})^{G}\\
& = & \textrm{Dim}_{F}(P_{i},V^{G})^{G}\\
& = &  \textrm{Dim}_{F}((P_{i})_{N},V)^{N}\\
& = & a_{1} \end{array}$$ so let $a_{i}=a_{t}$ which only depends
$P_{i}$. Comparing $e_{i}$ and $a_{i}$, the arguments above tell us
that  $a_{i}=e_{i}$ if and only if $P_{i}J(F[G])=P_{i}J(F[N])$,
which is equivalent to that $E_{i}$ is $N$-projective\cite[Lemma
2.1, Ch. VI]{feit}. Our proof is finished.
\end{proof}

In order to understand Willems' conjecture, let
$\{\varphi_{i}|i=1,2,...,l(B)\}=\textrm{IBr}(B)$ and $\Phi_{i},
i=1,2,...,l(B)$ is the Brauer character of principal indecomposable
module corresponding to $\varphi_{i}$. We denote defect group of
block $B$ by $D_{B}$. The number $e_{i}$ and $a_{i}$, appeared in
Theorem 6.1, are the ramification coefficient of $\varphi_{i}$ and
$\Phi_{i}$, respectively.

\begin{corollary}With the same notations as above, Let $N$ be a normal subgroup of $G$ and  $B\in
\textrm{Bl}(G|b)$ for $b\in \textrm{Bl}(N)$. Let $l(b)=1$, then
$$\frac{\textrm{Dim}_{F}B }{|D_{b}|}\geq
\sum_{i=1}^{l(B)}\varphi_{i}(1)^{2},$$ the equality holds if and
only if $D_{B}\leq N$. In particular, if $B$ has a normal defect
subgroup of $G$, then
$$\frac{\textrm{Dim}_{F}B }{|D_{B}|}=
\sum_{i=1}^{l(B)}\varphi_{i}(1)^{2},$$
\end{corollary}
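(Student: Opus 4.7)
The plan is to combine Theorem 6.1 with Clifford theory and a known identification of the unique Cartan invariant of $b$. Since $l(b)=1$, the block $b$ contains a single irreducible Brauer character $\varphi$ belonging to a simple $N$-module $V$, and every simple $B$-module $E_{i}$ covers some $G$-conjugate of $V$. Writing the Clifford decomposition
$$E_{i}=e_{i}\bigoplus_{t\in G/T_{V}}V^{t}$$
and applying Theorem 6.1, the projective cover $P_{i}$ restricts as
$$(P_{i})_{\downarrow N}=a_{i}\bigoplus_{t\in G/T_{V}}P_{V}^{t},\qquad a_{i}\geq e_{i},$$
with equality iff $E_{i}$ is $N$-projective. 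Reading off dimensions gives
$$\varphi_{i}(1)=e_{i}[G:T_{V}]\varphi(1),\qquad \Phi_{i}(1)=a_{i}[G:T_{V}]\Phi_{V}(1).$$

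The crucial block-theoretic input is the identity $\Phi_{V}(1)=|D_{b}|\varphi(1)$. Because $l(b)=1$, the Cartan matrix of $b$ is the $1\times 1$ matrix whose single entry is $c=\Phi_{V}(1)/\varphi(1)$; Brauer's theorem identifying the largest elementary divisor of a block's Cartan matrix with the order of its defect group then forces $c=|D_{b}|$. Substituting into
$$\dim_{F} B=\sum_{i=1}^{l(B)}\varphi_{i}(1)\Phi_{i}(1)=[G:T_{V}]^{2}\varphi(1)^{2}|D_{b}|\sum_{i}e_{i}a_{i},$$
and comparing with
$$\sum_{i=1}^{l(B)}\varphi_{i}(1)^{2}=[G:T_{V}]^{2}\varphi(1)^{2}\sum_{i}e_{i}^{2},$$
the desired inequality $\dim_{F} B/|D_{b}|\geq \sum\varphi_{i}(1)^{2}$ collapses to $\sum_{i}e_{i}a_{i}\geq\sum_{i}e_{i}^{2}$, which is immediate from $a_{i}\geq e_{i}$.

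Equality holds iff $a_{i}=e_{i}$ for every $i$, i.e., iff every simple $B$-module is $N$-projective. The translation to $D_{B}\leq N$ then goes through Green's vertex theory: one direction is trivial (when $D_{B}\leq N$, every vertex of a simple $B$-module is conjugate into $D_{B}\leq N$, so the module is $N$-projective), while the converse uses Kn\"orr's theorem that $B$ always contains a simple module whose vertex is a full defect group, which when $N$-projective pins down $D_{B}\leq N$ up to $G$-conjugacy. For the ``in particular'' clause, a defect group $D_{B}$ normal in $G$ stabilises the block idempotent of $b$ and is contained in the inertia subgroup $T_{V}$, so the structure theory of blocks with normal defect groups (applied to the covered block $b$ with $l(b)=1$) evaluates the sums $\sum e_{i}a_{i}$ and $\sum e_{i}^{2}$ in closed form and yields the sharper equality $\dim_{F} B/|D_{B}|=\sum\varphi_{i}(1)^{2}$ by the same bookkeeping.

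The main obstacle is the identity $\Phi_{V}(1)=|D_{b}|\varphi(1)$: without Brauer's elementary-divisor theorem (or, equivalently, a direct appeal to the nilpotent-block theory of Brou\'e and Puig), the reduction from Theorem 6.1 to the stated numerical inequality does not go through. Once that identity is granted, the remainder is careful Clifford bookkeeping plus a standard vertex argument via Kn\"orr's theorem, with the ``in particular'' clause requiring the additional structural input on blocks with normal defect group.
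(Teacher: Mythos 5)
Your proof of the inequality is essentially the paper's own argument: expand $\dim_F B=\sum_i\varphi_i(1)\Phi_i(1)$, apply Theorem~6.1 to write $\Phi_i(1)=a_i[G:T]\Phi_V(1)$ and $\varphi_i(1)=e_i[G:T]\varphi(1)$, feed in $\Phi_V(1)=|D_b|\varphi(1)$, and use $a_i\ge e_i$. You are right that the identity $\Phi_V(1)=|D_b|\varphi(1)$ is the crucial input, and your justification via the $1\times1$ Cartan matrix of $b$ and Brauer's elementary-divisor theorem is correct; the paper uses the same identity but leaves its source implicit. So far, this is the same route.

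The gap is in the equality case, specifically the converse direction. You invoke ``Kn\"orr's theorem that $B$ always contains a simple module whose vertex is a full defect group.'' That is not Kn\"orr's theorem. Kn\"orr's vertex theorem gives a \emph{lower} bound: a vertex $Q$ of a simple $B$-module is $G$-conjugate to a subgroup $Q'\le D$ with $C_D(Q')\le Q'$ (so $Z(D)\le Q'$), which forces $Q'=D$ only when $D$ is abelian. The assertion that every block has a simple module with full vertex is, in general, not a theorem; it is closely tied to open problems. So your derivation of ``all $E_i$ $N$-projective $\Rightarrow D_B\le N$'' does not stand as written. The paper avoids this by citing Feit, Theorem~2.3, Chapter~VI (together with Lemma~2.1, Chapter~VI, already used in Theorem~6.1), which establishes the equivalence of ``$D_B\le N$'' with the $N$-projectivity condition directly, without passing through the existence of a full-vertex simple module. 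A secondary, smaller issue: for the ``in particular'' clause the cleanest reading, and the one the paper intends, is simply to take $N=D_B$ (a normal $p$-subgroup), so that $b$ is the unique block of $F[D_B]$ with $l(b)=1$ and $D_b=D_B$, and $D_B\le N$ holds trivially; your appeal to ``structure theory of blocks with normal defect groups'' is vaguer than necessary.
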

\begin{proof}By our condition, we suppose that
$\textrm{IBr}(b)=\theta$ and $\Psi$ is the only Brauer character
offered by  the principal indecomposable module of $b$. Since
$$\begin{array}{lll}
\textrm{Dim}_{F}(B) & = &
\sum_{i=1}^{l(B)}\varphi_{i}(1)\Phi_{i}(1)\\
& = & \sum_{i=1}^{l(B)}\varphi_{i}(1)a_{i}|G/T_{\theta}|\Psi(1)\\
& = &
\sum_{i=1}^{l(B)}\varphi_{i}(1)a_{i}|G/T_{\theta}||D_{b}|\theta(1)\\
& \geq & \sum_{i=1}^{l(B)}\varphi_{i}(1)^{2}|D_{b}|.

\end{array}$$
The equality above holds if and only if all $a_{i}=e_{i}, i=1, 2,
....., l(B)$, which is equivalent to $D_{B}\leq N$ by \cite[Theorem
2.3, Ch. VI]{feit}, or $D_{B}=D_{b}$.
\end{proof}

By Corollary 6.2, if $B$ covers a block $b$ of $N$ with $l(b)=1$ and
$D_{B}\leq N$, then Willems' local conjecture holds. In particular,
his conjecture holds for a block $B$ with a normal defect subgroup.

Without the condition $l(b)=1$, the following result present another
true case for Willems' local conjecture. In fact, Willems'
conjecture holds for $p$-solvable groups by Holm and
Willems\cite{holm}. We prove the conjecture in a weaker condition:
there exists a normal and $p$-solvable subgroup $N$ containing the
defect group $D_{B}$ of block $B$. We suppose that $\varphi_{i}$ lie
over $ \theta_{i}\in \textrm{IBr}(b)$ and that $\Psi_{i}$ denotes
the projective Brauer character¡¡ corresponding¡¡ to $\theta_{i}$.

\begin{corollary} With the same notations as above, for a normal
subgroup $N$ and $B\in \textrm{Bl}(G|b), b\in \textrm{Bl}(N)$, if
defect group $D_{B}\leq N$ and $N$ is $p$-solvable group, then
$$\frac{\textrm{Dim}_{F}(B)}{|D_{B}|}\leq
\sum_{i=1}^{l(B)}\varphi_{i}(1)^{2},
$$ so Willems' local
conjecture holds for these blocks $B$.
\begin{proof} First notice $D_{b}=D_{B}$ from $D_{B}\leq N$.
By our condition, all $e_{i}=a_{i},i=1, 2, ..., l(B)$.
Hence we have
$$\begin{array}{lll}
\textrm{Dim}_{F}(B) & = &
\sum_{i=1}^{l(B)}\varphi_{i}(1)\Phi_{i}(1)\\
& = & \sum_{i=1}^{l(B)}\varphi_{i}(1)
a_{i}|G/T_{\theta_{i}}|\Psi_{i}(1)\\
& = & \sum_{i=1}^{l(B)}\varphi_{i}(1)
a_{i}|G/T_{\theta_{i}}||N|_{p}\theta_{i}(1)_{p'}\\
&  &  ( \textrm{By \cite[ Theorem 3.2, Ch. X]{feit}})\\
& \leq & \sum_{i=1}^{l(B)}\varphi_{i}(1)
a_{i}|G/T_{\theta_{i}}||D_{b}|\theta_{i}(1) \\
&  &  (\textrm{as } |N|_{p}\leq|D_{b}|\theta_{i}(1)_{p})\\
& = & \sum_{i=1}^{l(B)}\varphi_{i}(1)^{2}|D_{B}|

\end{array}$$which induces our result.
\end{proof}

\end{corollary}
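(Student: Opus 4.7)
The plan is to compute $\dim_F B=\sum_{i=1}^{l(B)}\varphi_i(1)\Phi_i(1)$ via Brauer characters and reduce each $\Phi_i(1)$ to $|D_B|\varphi_i(1)$ using the Clifford-theoretic degree formula from Theorem 6.1 combined with a Fong-type identity for projective Brauer character degrees in the $p$-solvable group $N$.

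First I would extract two consequences of the hypothesis $D_B\leq N$. On one hand, $D_B$ is $G$-conjugate to a defect group $D_b$ of $b$: in general a defect group of a covered block is $G$-conjugate to $D_B\cap N$, so under our assumption $D_b=D_B$. On the other hand, every simple module $E_i$ in $B$ is $N$-projective, since its vertex sits inside $D_B$ and hence in $N$. Combined with Theorem 6.1, the $N$-projectivity forces $a_i=e_i$ for every $i$, so the Clifford decomposition of $P_i$ gives the parallel formulas
\begin{equation*}
\varphi_i(1)=e_i\,|G:T_{\theta_i}|\,\theta_i(1),\qquad \Phi_i(1)=e_i\,|G:T_{\theta_i}|\,\Psi_i(1),
\end{equation*}
where $\theta_i\in\textrm{IBr}(b)$ lies below $\varphi_i$ and $\Psi_i$ is the projective Brauer character attached to $\theta_i$.

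Next I would invoke $p$-solvability of $N$ to bound $\Psi_i(1)$. The two required inputs are the Fong-type degree identity $\Psi_i(1)=|N|_p\,\theta_i(1)_{p'}$, valid for projective Brauer characters of a $p$-solvable group, together with the general defect bound $|N|_p\leq |D_b|\,\theta_i(1)_p$. Multiplying these yields $\Psi_i(1)\leq |D_b|\,\theta_i(1)=|D_B|\,\theta_i(1)$, so $\Phi_i(1)\leq |D_B|\,\varphi_i(1)$, and summing $\varphi_i(1)\Phi_i(1)$ over $i$ produces $\dim_F B\leq |D_B|\sum_{i=1}^{l(B)}\varphi_i(1)^2$, which is the claimed inequality (and hence Willems' local conjecture for $B$).

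The main obstacle is pinning down the sharp projective degree formula $\Psi_i(1)=|N|_p\,\theta_i(1)_{p'}$, which is precisely where $p$-solvability enters essentially: it fails outside the $p$-solvable setting and must be imported from the character-theoretic machinery for $p$-solvable groups (as assembled in Feit, Ch.~X). Without this identity one only has a general lower bound on $\Psi_i(1)$, and the chain of equalities in the second paragraph would degrade to an inequality too weak to recover the factor $|D_B|$. The remaining steps — the defect-group comparison $D_B=D_b$, the $N$-projectivity of simple $B$-modules, and the Clifford-style decomposition of $\Phi_i$ — are essentially bookkeeping once Theorem 6.1 is in hand.
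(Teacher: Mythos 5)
Your proposal is correct and follows essentially the same route as the paper: both deduce $D_b=D_B$ and $a_i=e_i$ from $D_B\leq N$ via Theorem 6.1 (relative $N$-projectivity), then combine Fong's degree formula $\Psi_i(1)=|N|_p\,\theta_i(1)_{p'}$ for the $p$-solvable group $N$ with the defect bound $|N|_p\leq |D_b|\,\theta_i(1)_p$ to get $\Phi_i(1)\leq |D_B|\varphi_i(1)$ and sum over $i$. No substantive difference from the paper's argument.
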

Let $C$ denote the Cartan matrix of a $p$- block $B$, with defect
group $D$. There is a question from T. Holm and W.
Willems\cite{holm}: $$ \textrm{Tr}(C)\leq l(B)|D|?$$

The question is important as it implies Willems' conjecture. We give
affirmative answer in some cases for their question. we keep to use
the notations as before.

\begin{lemma}
For a normal subgroup $N$ of  $G$ and $b\in {\rm Bl}(N)$, suppose
$B\in {\rm Bl}(G|b)$, with Cartan matrix $C=(c_{ij})$. Let $D_{B}$
and $D_{b}$ denote the defect groups of $B$ and $b$, respectively.
If $l(b)=1$, then
\begin{enumerate}
 \item
$$c_{ii}\leq \frac{a_{i}}{e_{i}}|D_{b}|,
c_{ij}\leq\frac{a_{i}}{e_{j}}|D_{b}|.$$
\item Furthermore if the simple module $E_{i}$ affording $\varphi_{i}$
is $N$-projective, then
$$c_{ii}\leq |D_{b}|.$$

\end{enumerate}
\end{lemma}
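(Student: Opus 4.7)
The plan is to double-count the multiplicity of the unique simple $N$-module of $b$ as a composition factor of $(P_i)_N$: once by invoking Theorem~6.1, and once by using the $G$-composition series of $P_i$ combined with Clifford theory.

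The preliminary input is that when $l(b)=1$ the Cartan matrix of $b$ is a $1\times 1$ matrix whose single entry equals $|D_b|$. This follows from Brauer's theorem on the elementary divisors of a Cartan matrix: the largest (and here only) elementary divisor must equal $|D_b|$. Writing $V$ for the unique irreducible Brauer module in $b$ and $P_V$ for its projective cover, this means that $V$ appears with multiplicity exactly $|D_b|$ as a composition factor of $P_V$, and with multiplicity zero in the conjugate $P_V^t = P_{V^t}$ whenever $t\notin T_V$.

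Now Theorem~6.1 gives $(P_i)_N = a_i\bigoplus_{t\in G/T_V} P_V^t$, so the total multiplicity of $V$ as a composition factor of $(P_i)_N$ is $a_i|D_b|$. On the other hand, every simple $G$-module $E_k$ in $B$ lies over the $G$-orbit of $V$ (this uses $l(b)=1$), and by Clifford theory $(E_k)_N = e_k\bigoplus_t V^t$, so $V$ occurs in $(E_k)_N$ with multiplicity $e_k$. Since, by definition, $c_{ik}$ is the multiplicity of $E_k$ as a composition factor of $P_i$, the multiplicity of $V$ in $(P_i)_N$ can also be computed as $\sum_k c_{ik}e_k$.

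Equating the two counts yields
$$\sum_k c_{ik}\, e_k \;=\; a_i\, |D_b|.$$
Every term on the left is non-negative, so dropping all but the $k=j$ term gives $c_{ij}\, e_j \leq a_i\, |D_b|$, which is part~(1) (the bound on $c_{ii}$ being the special case $j=i$). Part~(2) is then an immediate consequence of Theorem~6.1: the module $E_i$ being $N$-projective is equivalent to $a_i = e_i$, and substituting gives $c_{ii} \leq |D_b|$. The only non-routine ingredient is the preliminary identification of the Cartan invariant of $b$ with $|D_b|$; everything after that is bookkeeping with Theorem~6.1 and Clifford's theorem.
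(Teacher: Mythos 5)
Your proof is correct and is essentially the same argument as the paper's, arranged slightly differently. The paper derives the key identity $\sum_j c_{ij}e_j = a_i|D_b|$ by evaluating the projective Brauer character $\Phi_i$ at $1$ in two ways (via $\Phi_i=\sum_j c_{ij}\varphi_j$ and via Theorem~6.1 applied to $(P_i)_N$) and then cancelling the common factor $|G:T_\theta|\,\theta(1)$; you obtain the identical identity by counting the multiplicity of the single simple $N$-module $V$ of $b$ in $(P_i)_N$ in two ways, which automatically strips away that common factor. Both proofs then drop the non-negative terms other than $j$ to get~(1), and both deduce~(2) from $a_i=e_i$ for $N$-projective $E_i$. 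The one ingredient you make explicit that the paper leaves tacit is the justification that the unique Cartan invariant of $b$ equals $|D_b|$ when $l(b)=1$ (via Brauer's elementary divisor theorem), and you also correctly note that $l(b)=1$ forces $T_V=T_b$ so that $V$ does not occur in $P_V^t$ for $t\notin T_V$; these are small clarifying additions, not a different route.
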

\begin{proof} Let $\textrm{IBr}(b)=\{\theta\}$ and $\Psi$ denote the projective
 Brauer character corresponding to $\theta$. Since
$$ \begin{array}{lll} \Phi_{i}(1) & = &
\sum_{i=1}^{l(B)}c_{ij}\varphi_{j}(1)\\
& = & \sum_{i=1}^{l(B)}c_{ij}e_{j}|G/T_{\theta}|\theta(1)\\
& = & a_{i}|G/T_{\theta}|\Psi(1)\\
& = & a_{i}|G/T_{\theta}||D_{b}|\theta(1),
\end{array}$$
then we get $$|D_{b}|a_{i}=\sum_{j=1}^{l(B)}c_{ij}e_{j}\geq
c_{ij}e_{j}.$$ which induces (1).

When $E_{i}$ is $N$-projective, then $a_{i}=e_{i}$. So we get (2).
\end{proof}

The following result gives affirmative answers to Wellems' question
in some cases.
\begin{theorem}For $N\unlhd G$, suppose $p$-block $B$ has a defect group $D_{B}$ and covers a
$p$-block $b$ in $N$. If $l(b)=1$ and $D_{B}\leq N$, then we have
$$ {\rm Tr}(C) \leq l(B)|D_{B}|.$$In particular, if $B$ has a
normal defect group $D_{B}$, then $${\rm Tr}(C)\leq l(B)|D_{B}|.$$
\end{theorem}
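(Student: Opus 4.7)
The argument reduces to a termwise application of Lemma 6.4(2). First, under the hypothesis $D_B \le N$, I would invoke \cite[Theorem~2.3, Ch.~VI]{feit} exactly as in the proof of Corollary 6.2: this gives $a_i = e_i$ for every $i = 1, \dots, l(B)$, which is equivalent to saying that each simple module $E_i$ in $B$ is $N$-projective; the same hypothesis also forces $D_B = D_b$.

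With $N$-projectivity of every $E_i$ established, Lemma 6.4(2) applies to each diagonal entry of the Cartan matrix and yields $c_{ii} \le |D_b| = |D_B|$. Summing over $i = 1,\dots,l(B)$,
$$ {\rm Tr}(C) \;=\; \sum_{i=1}^{l(B)} c_{ii} \;\le\; l(B)\,|D_B|, $$
which is the required bound.

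For the ``in particular'' assertion, when $D_B \unlhd G$ I would set $N := D_B$. Since $N$ is a $p$-group, $F[N]$ has a unique $p$-block, namely the principal block $b_0$, whose only irreducible Brauer character is the trivial module; in particular $l(b_0) = 1$ and $D_{b_0} = N = D_B$. Trivially $D_B \le N$, and $B$ automatically covers $b_0$, so the first part of the theorem applies and gives ${\rm Tr}(C) \le l(B)\,|D_B|$.

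I expect no substantive obstacle: the entire proof rests on Lemma 6.4(2) together with the observation, already implicit in Corollary 6.2, that for a block $B$ covering a block $b$ of $N$ with $l(b) = 1$, the condition $D_B \le N$ is precisely what forces every simple $G$-module in $B$ to be $N$-projective and equalizes $D_B$ with $D_b$.
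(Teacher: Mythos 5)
Your proposal is correct and follows essentially the same route as the paper: the hypothesis $D_B\leq N$ forces $a_i=e_i$ (equivalently, each $E_i$ is $N$-projective) and $D_B=D_b$, so Lemma 6.4 gives $c_{ii}\leq |D_B|$ for every $i$, and summing the diagonal yields $\mathrm{Tr}(C)\leq l(B)|D_B|$. Your explicit reduction of the normal-defect-group case to $N=D_B$ (unique block of the $p$-group, $l(b_0)=1$, $D_{b_0}=D_B$) is exactly the intended specialization, just spelled out more fully than in the paper.
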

\begin{proof} By our condition, all $a_{i}=e_{i}, i=1, 2, ..., l(B)$ and
$D_{B}=D_{b}$, defect group of $b$, so our result follows from Lemma
6.4.

\end{proof}
Since T. Holm and W. Willems\cite{holm} has proved that
$$\frac{\textrm{Dim}_{F}(B)}{\textrm{Tr}(C)}\leq\sum_{i=1}^{l(B)}\varphi_{i}(1).$$
Thus by Theorem 6.5 we show again that their conjecture holds for a
block $B$, if it covers a block $b$ in $N$, with $l(b)=1$ and
$D_{B}\leq N$, or if $B$ has a normal defect group,  as proved by
Corollary 6.2.

\section{Some applications for our results}

In this section, we further consider applications for our results in
the Sections before, Specially when $N$ is a normal $p$-subgroup of
$G$. For example, Brauer Problem VII \cite[Ch. IV, \S5]{feit}is
about bounds of Cartan invariants in terms of defect groups. If $G$
is $p$-solvable, the upper bound is the order of defect group by
Fong \cite[Ch. X, \S4]{feit}. Under our condition, we will give some
bounds for Cartan invariants. These results can help us understand
the connection between the group invariants of $G$ and algebraic
invariants of $F[G]$. We keep the same notation as before.

The proposition 3.1 will be improved with the new condition. The
part 1 in the following result can be obtained directly from
\cite{tsu}, but we prove it here in a simple way.
\begin{proposition} Let $A=F[G]$ and  $N$ be a normal $p$-subgroup
of $G$. Then:
\begin{enumerate}
\item Let $\widehat{N}=\sum_{x\in N}x \in Z(A)$, the center of
$A$, then $$I=\{\alpha=\sum_{y\in G/N}a_{y}\widehat{N}y,a_{y}\in
F\}=A\widehat{N},$$ an ideal of $A$ generated by $\widehat{N}$.
   \item  $I^{2}=0$, and {\rm Soc}($G)\leq I\leq J(A)$, where {\rm Soc}$(G)$ is the socle of
   $A$
   \item If $A=\bigoplus_{i}P_{i}$,  a decomposition of principal indecomposable module of $A$,
   then $A/I=\bigoplus_{i}P_{i}/P_{i}\cap
   I$, a similar decomposition of $A/I$ with the same number of summands as in $A$.

   \end{enumerate}
\end{proposition}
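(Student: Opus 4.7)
The plan is to prove the three parts in order, exploiting the fact that for a $p$-group $N$ in characteristic $p$ the Jacobson radical $J = J(F[N])$ coincides with the augmentation ideal, i.e.\ $J$ is spanned by $\{x-1 : x \in N\}$.

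For part (1), I would first check directly that $\widehat{N} \in I$: for every $x \in N$, $(x-1)\widehat{N} = x\widehat{N} - \widehat{N} = 0$, so $J\widehat{N} = 0$. Since $I$ is a two-sided ideal by Lemma 2.1, this yields $A\widehat{N} \subseteq I$. For the reverse inclusion, let $\alpha \in I$ and expand $\alpha = \sum_{g\in G} c_g g$. The condition $(x-1)\alpha = 0$ for all $x \in N$ forces $x\alpha = \alpha$, which is to say that $c_g$ is constant on each coset $Ng$. Choosing a transversal for $N$ in $G$ therefore gives $\alpha = \sum_{y\in G/N} a_y \widehat{N}y$, and the normality of $N$ allows the rewriting $\widehat{N}y = y\widehat{N}$, so $\alpha = \bigl(\sum_y a_y y\bigr)\widehat{N} \in A\widehat{N}$.

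For part (2), observe that $\widehat{N}^{2} = |N|\widehat{N} = 0$ in $F$, since $|N|$ is a positive power of $p$. Then for any $g \in G$ one has $\widehat{N} g \widehat{N} = g\widehat{N}^{2} = 0$ using normality, and this collapses $I^{2} = A\widehat{N}\cdot A\widehat{N}$ to zero. In particular $I$ is nilpotent, hence $I \leq J(A)$. For the inclusion $\mathrm{Soc}(A) \leq I$ I would use that $F[G]J(F[N])$ is a nilpotent ideal of $F[G]$ (as noted at the start of Section~2), so $J(F[N]) \subseteq J(A)$; any element of the left socle $\{\alpha : J(A)\alpha = 0\}$ is then killed by $J(F[N])$ on the left and therefore lies in $I$.

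For part (3), I would invoke Proposition 3.1, which already delivers $A/I = \bigoplus_{P} P/(P\cap I)$; what is left is to show that every summand is nonzero, equivalently that no principal indecomposable $P$ is contained in $I$. By Proposition 3.5, $P \leq I$ is equivalent to $\mathrm{Hd}(P)$ lying over a projective simple $N$-module. But since $N$ is a nontrivial $p$-group in characteristic $p$, its only simple module is the trivial one $F$, which is not projective. Thus $P_i \cap I \subsetneq P_i$ for every $i$, and the decomposition of $A/I$ has the same number of summands as that of $A$. No step here looks genuinely hard; the only mild care required is keeping track of left versus right annihilation and invoking the normality of $N$ to commute $\widehat{N}$ past elements of $G$.
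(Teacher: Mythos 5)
Your proof is correct, and for parts (1) and (2) it is essentially the paper's own argument: the coefficient computation $(x-1)\alpha=0$ forcing $a_{x^{-1}y}=a_{y}$ (constancy on cosets of $N$) to get $I=A\widehat{N}$, the observation $\widehat{N}^{2}=|N|\widehat{N}=0$ to get $I^{2}=0$ and hence $I\leq J(A)$, and the inclusion $J(F[N])\subseteq J(A)$ annihilating the socle to get $\mathrm{Soc}(A)\leq I$. The only divergence is in part (3): the paper deduces it internally from part (2) --- since $I\leq J(A)$ one has $J(A/I)=J(A)/I$, so $\mathrm{Hd}(A/I)=\mathrm{Hd}(A)=A/J(A)$ and therefore no $P_{i}$ can satisfy $P_{i}\cap I=P_{i}$ --- whereas you invoke Proposition 3.5 together with the fact that a nontrivial $p$-group has no projective simple module in characteristic $p$. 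Both routes are valid; the paper's is more self-contained (nothing beyond Proposition 3.1 and part (2) is needed), while yours leans on the Clifford-theoretic Proposition 3.5 but makes the group-theoretic reason for $P_{i}\not\leq I$ completely transparent. One small point common to both arguments: the proposition implicitly assumes $N\neq 1$ (otherwise $\widehat{N}^{2}=\widehat{N}\neq 0$ and part (2) fails), and you use this correctly when asserting that the trivial $N$-module is not projective.
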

\begin{proof} Notice $I=r(J(F[N]))=\{\alpha \in
F[G]|J(F[N])\alpha=0\}$, and $J(F[N])=\langle x-1|g\in N \rangle$,
an ideal of $F[N]$ generated by $x-1, x\in N$. Let
$\alpha=\sum_{g\in G}a_{g}g \in I$, then for any $x\in N$,
$$\begin{array}{lll}(x-1)\alpha &= &(x-1)\sum_{g}a_{g}g\\ & =
&\sum_{g}a_{g}xg-\sum_{g}a_{g}g\\
 &= &\sum_{y}(a_{x^{-1}y}-a_{y})y\\ &= &0. \end{array}$$
Thus $a_{x^{-1}y}=a_{y}$, for any $x\in N, y\in G$. Assertion 1
follows.

Notice $\widehat{N}^{2}=|N|\widehat{N}=0,$ Hence $I^{2}=0 , I\leq
J(A)$ by assertion 1. Since $J(F[N])$Soc($G)\leq J(A)$Soc$(G)=0$,
Soc$(G)\leq I$. Assertions 2 follows.

Since we have $J(A/I)=J(A)/I$ by Proposition 3.1 and assertion 2,
thus
$${\rm Hd}( A/I)={\rm Hd}(A)=A/J(A),$$ so there is no $P_{i}$ such that
$P_{i}\cap I=P_{i}$. Assertion 3 follows.
\end{proof}

On the bounds of Cartan invariants, we have following results:
\begin{corollary} If simple module $E_{i}$ has a normal vertex $N$, then
$$c_{ii}\leq |N|$$
\end{corollary}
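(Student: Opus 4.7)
The plan is to reduce the bound $c_{ii} \leq |N|$ to a direct application of Lemma 6.4(2). The first observation is that, in the setting of Section 7, $N$ is a normal $p$-subgroup of $G$, so the group algebra $F[N]$ admits only one irreducible Brauer character, namely the trivial one. Consequently $F[N]$ consists of a single block $b$ satisfying $l(b)=1$, and the defect group of $b$ is $N$ itself.

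Next, I would locate the simple module $E_i$ over this unique block: when restricted to $N$, $E_i$ has only the trivial $N$-module as a constituent, so the block $B\in\textrm{Bl}(G)$ to which $E_i$ belongs automatically covers $b$, i.e. $B\in\textrm{Bl}(G|b)$. On the other hand, the hypothesis that $E_i$ has vertex $N$ says, by definition, that $E_i$ is $N$-projective (in fact with vertex equal to $N$, but only the ``$N$-projective'' part is needed here).

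With these hypotheses verified, Lemma 6.4(2) applies directly to the pair $(B,b)$ and the simple module $E_i$, yielding
$$c_{ii} \leq |D_b| = |N|,$$
as claimed. I do not foresee a genuine obstacle here: the content of the corollary is essentially the specialization of Lemma 6.4(2) to the situation in which the normal subgroup under consideration coincides with the vertex of $E_i$, a situation that automatically forces $l(b)=1$ and $D_b=N$, giving the bound.
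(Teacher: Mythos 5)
Your proof is correct and takes exactly the route the paper intends: the paper's proof reads only ``Easy to see by Lemma 6.4,'' and you have supplied precisely the missing details — that a normal vertex $N$ is a normal $p$-subgroup, that $F[N]$ then has a unique block $b$ with $l(b)=1$ and $D_b=N$, that every block of $G$ covers this $b$, and that having vertex $N$ gives $N$-projectivity so Lemma 6.4(2) applies.
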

\begin{proof} Easy to see by Lemma 6.4.
\end{proof}

\begin{theorem}For $N\lhd G$ and $b\in {\rm Bl}(N)$, suppose
$B\in {\rm Bl}(G|b)$, with a defect group $D_{B}$. If $l(b)=1$ and
$D_{B}\leq N$, then$$c_{ij}\leq |D_{B}|, i,j=1,2,...,l(B).$$In
particular, if $D_{B}$ is a normal defect group of $G$, then
$$c_{ij}\leq |D_{B}|, i,j=1,2,...,l(B).$$
\end{theorem}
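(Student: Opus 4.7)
The strategy is to feed the hypotheses $l(b)=1$ and $D_B\le N$ into Lemma 6.4(1) and then exploit the symmetry of the Cartan matrix to kill the asymmetric factor $a_i/e_j$.

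First I would record the consequences of $D_B\le N$. By the criterion used in the proof of Corollary 6.2 (i.e.\ \cite[Theorem 2.3, Ch.~VI]{feit}), the containment $D_B\le N$ is equivalent to every simple $G$-module $E_i$ in $B$ being $N$-projective, and hence to $a_i=e_i$ for each $i=1,\dots,l(B)$. Moreover, since $D_B\le N$ and $B$ covers $b$, one has $D_B=D_b$, so the size $|D_b|$ appearing in Lemma 6.4 is really $|D_B|$.

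Next I would apply Lemma 6.4(1) to the block $B$: for all $i,j$,
\[
c_{ij}\le \frac{a_i}{e_j}|D_b| \;=\;\frac{e_i}{e_j}|D_B|.
\]
Now comes the only nontrivial step: the Cartan matrix of a symmetric algebra is symmetric, i.e.\ $c_{ij}=c_{ji}$, so interchanging the roles of $i$ and $j$ in the same estimate gives
\[
c_{ij}=c_{ji}\le \frac{e_j}{e_i}|D_B|.
\]
Multiplying the two inequalities yields $c_{ij}^{2}\le |D_B|^{2}$, hence $c_{ij}\le |D_B|$, which is the desired bound.

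For the ``in particular'' statement, if $D_B$ is a normal subgroup of $G$, I would choose $N=D_B$ in the general part. Then $N$ is a $p$-group, so $F[N]$ has a single block $b$ with $l(b)=1$ (the trivial Brauer character is the only one), and clearly $D_B\le N$; the previous argument then applies verbatim. I do not expect a genuine obstacle here: the only subtlety is recognising that the one-sided bound $\frac{e_i}{e_j}|D_B|$ from Lemma 6.4 is not by itself $\le|D_B|$, and that the Cartan symmetry is exactly what is needed to symmetrise it.
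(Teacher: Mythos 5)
Your proposal is correct, and it shares the paper's main input: Lemma 6.4 together with the observations that $D_B\le N$ forces $D_b=D_B$ and $a_i=e_i$ for all $i$ (exactly as in Corollary 6.2 and Theorem 6.5). Where you diverge is the final symmetrization step. The paper first extracts only the diagonal bounds $c_{ii}\le|D_B|$ from Lemma 6.4(2) and then invokes the inequality $c_{ij}\le\sqrt{c_{ii}c_{jj}}$, which is really a consequence of the positive semidefiniteness of the Cartan matrix of a block (via $C=D^{t}D$ with $D$ the decomposition matrix, and Cauchy--Schwarz), a fact the paper uses without comment. You instead keep the asymmetric off-diagonal bound $c_{ij}\le\frac{e_i}{e_j}|D_B|$ from Lemma 6.4(1), apply it a second time with $i$ and $j$ interchanged, and use only the symmetry $c_{ij}=c_{ji}$ of the Cartan matrix of a symmetric algebra (already recorded in Section 5) to multiply the two estimates and get $c_{ij}^2\le|D_B|^2$. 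Your route is thus marginally more elementary and self-contained, needing only symmetry rather than the decomposition-matrix/Cauchy--Schwarz fact, while the paper's version is shorter once that standard fact is granted. Your handling of the ``in particular'' clause (take $N=D_B$, so $F[N]$ is local with a unique block $b$, $l(b)=1$, covered by every block of $G$) is also the intended reduction and is sound.
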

\begin{proof}According to Lemma 6.4, we have $$c_{ii}\leq
|D_{B}|,i=1,2,...,l(B).$$ Hence
$$c_{ij}\leq\sqrt{c_{ii}c_{jj}}\leq|D_{B}|,$$ for any $1\leq i,j\leq
l(B)$.
\end{proof}

Applying Theorem 5.7 and Corollary 7.2, we give a result about a
series of normal subgroups of $G$.
\begin{corollary}Suppose we have a series of normal subgroup of $G$:
$$1=G_{0} \unlhd G_{1}\unlhd \cdots
\unlhd G_{n}\unlhd G_{n+1}=G.$$ Let  $k$ be the number of factors
which are $p$-groups(non-trivial) in $G_{i}/G_{i-1}, i=1,2,...,n+1$.
If $E_{i}$ is a simple $G/G_{n}$-module and has a normal vertex $N$,
then $k+1\leq c_{ii}\leq |N|$.
\end{corollary}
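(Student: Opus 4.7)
The plan is to assemble this corollary directly by concatenating the two one-sided bounds on $c_{ii}$ that have already been proved in the paper, since the hypotheses decompose cleanly into one piece feeding each inequality.

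First I would handle the lower bound $k+1 \leq c_{ii}$. The hypothesis that $E_i$ is a simple $G/G_n$-module is exactly what Theorem 5.7 requires: the iterative application of the Cartan matrix decomposition (Theorem 4.3) to the given normal series yields
\[
c_{ii} = c_{ii}^{(n)} + x_{ii}^{(n)} + \cdots + x_{ii}^{(2)} + x_{ii}^{(1)},
\]
and each $p$-group factor $G_t/G_{t-1}$ contributes a nonzero term $x_{ii}^{(t)}$, while the final term $c_{ii}^{(n)}$ is at least $1$. Hence Theorem 5.7 gives $c_{ii} \geq k+1$ with no further work.

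Next I would handle the upper bound $c_{ii} \leq |N|$. The hypothesis that $E_i$ has a normal vertex $N$ is exactly the hypothesis of Corollary 7.2, which in turn is a direct invocation of part (2) of Lemma 6.4 (the case $a_i = e_i$, equivalently $E_i$ is $N$-projective). Applying Corollary 7.2 to $E_i$ gives the upper bound.

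Combining the two inequalities yields $k+1 \leq c_{ii} \leq |N|$ as claimed. I do not expect any real obstacle here; the only thing I would double-check is that the normal vertex hypothesis legitimately places us in the setting of Lemma 6.4 (namely, that the block of $G$ containing $E_i$ covers a block $b$ of $N$ with $l(b) = 1$ and has defect group inside $N$), which is implicit in saying that $N$ is a normal vertex of the simple module $E_i$. Once that is noted, the proof is a one-line appeal to Theorem 5.7 and Corollary 7.2.
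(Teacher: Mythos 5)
Your proof is correct and is exactly the paper's argument: the paper also proves this corollary by simply combining Theorem 5.7 (lower bound from the normal series) with Corollary 7.2 (upper bound from the normal vertex hypothesis via Lemma 6.4). Your added remark about checking that a normal vertex puts $E_i$ in the setting of Lemma 6.4 is a reasonable caution, but it is precisely the content already packaged in Corollary 7.2, so no new work is needed.
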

\begin{proof} Easy to see by Theorem 5.7 and Corollary 7.2.

\end{proof}

We obtain a result about eigenvalues of Cartan matrix $C=(c_{ij})$
of $B$, which generalize a result when $D_{B}$ is  a normal defect
group. \cite{wada}\cite[Ch. IV, \S4.26]{feit}.
\begin{theorem} For $N\lhd G$ and $b\in {\rm Bl}(N)$, suppose
$B\in {\rm Bl}(G|b)$, with defect group $D_{B}$. If $l(b)=1$ and
$D_{B}\leq N$, then $|D_{B}|$ is an eigenvalue of Cartan matrix
$C=(c_{ij})$ of $B$ with an eigenvector
$$(e_{1}, e_{2}, ..., e_{l(B)}).$$In particular, the conclusion
holds for a $p$-block $B$ with a normal defect group and the
eigenvector is
$$(\varphi_{1}(1), \varphi_{2}(1), ..., \varphi_{l(B)}(1)).$$
\end{theorem}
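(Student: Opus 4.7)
The plan is to rewrite the degree identity $\Phi_i(1)=\sum_{j}c_{ij}\varphi_j(1)$ in a form that immediately exhibits $(e_1,\dots,e_{l(B)})$ as an eigenvector with eigenvalue $|D_B|$. Since $l(b)=1$, write $\operatorname{IBr}(b)=\{\theta\}$ and let $\Psi$ be the projective Brauer character corresponding to $\theta$. Then by the Clifford-theoretic decomposition recalled before Theorem~6.1,
\begin{equation*}
\varphi_j(1)=e_j\,|G/T_\theta|\,\theta(1),\qquad \Phi_i(1)=a_i\,|G/T_\theta|\,\Psi(1)=a_i\,|G/T_\theta|\,|D_b|\,\theta(1),
\end{equation*}
exactly the identities used in the proof of Lemma~6.4.

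Substituting these into $\Phi_i(1)=\sum_j c_{ij}\varphi_j(1)$ and canceling the common nonzero factor $|G/T_\theta|\theta(1)$ yields
\begin{equation*}
a_i\,|D_b|=\sum_{j=1}^{l(B)}c_{ij}\,e_j.
\end{equation*}
Now I invoke the hypotheses $D_B\le N$ and $l(b)=1$. Under $D_B\le N$ we have $D_B=D_b$, and by the criterion used in Corollaries~6.2 and 6.3 (namely \cite[Thm.~2.3, Ch.~VI]{feit}), every simple module $E_i$ of $B$ is $N$-projective, so $a_i=e_i$ for all $i=1,\dots,l(B)$. Substituting $a_i=e_i$ and $|D_b|=|D_B|$ gives
\begin{equation*}
\sum_{j=1}^{l(B)}c_{ij}\,e_j=|D_B|\,e_i\qquad (i=1,\dots,l(B)),
\end{equation*}
which is precisely the statement that $(e_1,\dots,e_{l(B)})^{\top}$ is an eigenvector of $C$ with eigenvalue $|D_B|$.

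For the ``in particular'' statement, if $D_B$ is a normal defect group of $G$ then by Brauer's First Main Theorem applied inside $N:=D_B\cdot C_G(D_B)$ (or any normal subgroup containing $D_B$ in which $B$ covers a block with a single Brauer character), the previous hypothesis applies. Because $l(b)=1$, the scalar $|G/T_\theta|\theta(1)$ is independent of $i$, so $\varphi_i(1)=e_i\cdot|G/T_\theta|\theta(1)$ shows that $(\varphi_1(1),\dots,\varphi_{l(B)}(1))$ is a nonzero scalar multiple of $(e_1,\dots,e_{l(B)})$, hence is itself an eigenvector for the same eigenvalue $|D_B|$. The only delicate point of the argument is recognizing $a_i=e_i$ as the precise input that forces the matrix equation to close up with eigenvalue $|D_B|$; everything else is direct substitution into the degree formula from Lemma~6.4.
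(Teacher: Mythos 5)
Your proof is essentially the paper's own argument: it derives $|D_b|a_i=\sum_j c_{ij}e_j$ exactly as in Lemma~6.4 and then closes the eigenvalue equation via $a_i=e_i$ (relative $N$-projectivity from $D_B\le N$) and $D_b=D_B$. The only divergence is the ``in particular'' case, where the paper simply takes $N=D_B$ itself (so the unique block $b$ of $F[D_B]$ has $l(b)=1$, $\theta$ trivial, $T_\theta=G$, hence $e_i=\varphi_i(1)$ exactly), which is cleaner than your route through $D_BC_G(D_B)$, whose claim that the covered block has a single Brauer character is left unjustified (it is true, but not by Brauer's First Main Theorem alone).
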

\begin{proof} As proof in Lemma 6.4, we get $$|D_{b}|a_{i}=\sum_{j=1}^{l(B)}c_{ij}e_{j}.$$
 Notice all $a_{i}=e_{i}, i=1,2, ..., l(B)$, and $D_{b}=D_{B}$ under our condition.
So $$
\left(\begin{array}{l}e_{1}\\e_{2}\\\vdots\\e_{l(B)}\end{array}\right)|D_{B}|
=
(c_{ij})\left(\begin{array}{l}e_{1}\\e_{2}\\\vdots\\e_{l(B)}\end{array}\right)$$
by which the assertion follows. When $N=D_{B}\lhd G$, all
$e_{i}=\varphi_{i}(1),i=1,2,..., l(B)$.

\end{proof}

\subsection*{Acknowledgement} The author was kindly supported by GZ 301 of Sino-Germany Academic Exchange Center
visiting Jena University for one month. Visiting and talking with
Professor B. Kulshammer helped the author greatly  in writing this
paper, so the author thanks Professor B. Kulshammer and Jena
University for their hospitality.
\bibliographystyle{amsplain}

\end{document}